\documentclass[12pt]{amsart}

\usepackage{amsfonts,amssymb,stmaryrd,amscd,amsmath,latexsym,amsbsy}

\usepackage{amssymb}
\usepackage{amsfonts}
\usepackage{latexsym}
\usepackage{bbold}

\newtheorem{theorem}{Theorem}[section]
\newtheorem{lemma}[theorem]{Lemma}
\newtheorem{proposition}[theorem]{Proposition}
\newtheorem{corollary}[theorem]{Corollary}
\theoremstyle{definition}
\newtheorem{definition}[theorem]{Definition}

\newtheorem{example}[theorem]{Example}

\newtheorem{remark}[theorem]{Remark}


\newcommand{\Hh}{\mathcal{H}}
\newcommand{\Tr}{\text{Tr}}

\newcommand{\End}{\text{End}}
\newcommand{\Hom}{\text{Hom}}

\newcommand{\Rep}{\text{Rep}}

\newcommand{\h}{\mathfrak{h}}

\newcommand{\ben}{\begin{enumerate}}
\newcommand{\een}{\end{enumerate}}

\newcommand{\mC}{{\mathcal C}}

\theoremstyle{plain}

\newtheorem*{sol}{Solution}

\theoremstyle{definition}

\theoremstyle{remark}

\newcommand{\solu}[1]{\begin{sol}{\bf (\ref{#1})}}

\begin{document}

\title{Representation theory in complex rank, I}

\author{Pavel Etingof}
\address{Department of Mathematics, Massachusetts Institute of Technology,
Cambridge, MA 02139, USA}
\email{etingof@math.mit.edu}
\maketitle

\vskip .05in
\centerline{\bf Dedicated to E. B. Dynkin on his 90th birthday}
\vskip .05in

\section{Introduction}

The subject of representation theory in complex rank goes back to the papers \cite{DM,De1}.  
Namely, these papers introduce Karoubian tensor categories $\Rep(GL_t)$ (\cite{DM,De1}), $\Rep(O_t)$,
$\Rep(Sp_{2t})$, $t\in \Bbb C$ (\cite{De1}), which are interpolations of the tensor categories of
algebraic representations of classical complex algebraic groups 
$GL_n$, $O_n,Sp_{2n}$ to non-integral rank\footnote{In fact, $\Rep (O_t)=\Rep (Sp_{-t})$ with a modified symmetric structure.}. 
This means that when $t=n$ is a nonnegative integer, these categories project onto the corresponding classical representation categories
${\bold{Rep}}(GL_n)$, ${\bold{Rep}}(O_n)$, ${\bold{Rep}}(Sp_{2n})$, i.e., 
they have tensor ideals, the quotients by which are the classical representation categories.  
Later, in \cite{De2}, P. Deligne introduced Karoubian tensor categories $\Rep(S_t)$, $t\in \Bbb C$, which are similar interpolations 
for the representation category of the symmetric group ${\bold{Rep}}(S_n)$ (and project onto it for $t=n$). 

In \cite{Kn1,Kn2}, F. Knop proposed a broad generalization of
Deligne's construction. In particular, he generalized 
his construction for $S_n$ to the case of wreath product groups $S_n\ltimes
\Gamma^n$, where $\Gamma$ is any finite group, constructing 
Karoubian tensor categories $\Rep(S_t\ltimes \Gamma^t)$ for complex
$t$, projecting for $t=n$ onto ${\bold{Rep}}(S_n\ltimes \Gamma^n)$. 

Since these categories are semisimple for non-integer $t$, 
one may think of these results as ''compact" representation 
theory in complex rank. The goal of this paper is to start 
developing the ''noncompact" counterpart of this theory.
Namely, in this paper we will introduce a method that allows one to 
define interpolations to complex rank of various categories of representations of classical type,
in particular the following ones:

1) wreath products; 

2) degenerate affine Hecke algebras; 

3) rational and trigonometric Cherednik algebras, 
symplectic reflection algebras; 

4) real groups (i.e., symmetric pairs);

5) Lie superalgebras; 

6) affine Lie algebras; 

7) Yangians;

8) (Parabolic) category O 
for reductive Lie algebras.

Namely, we will define representations 
of a ''noncompact algebra" of complex rank as representations of its 
''maximal compact subalgebra" (i.e. an (ind)-object of the corresponding tensor category) 
together with some additional structure (morphisms satisfying some relations). 
These morphisms and relations are obtained by writing down a ``categorically friendly'' 
definition of the corresponding classical structure, and then interpolating this definition to 
complex values of the rank parameter(s). We will discuss the explicit form of such morphisms and 
relations in the above special cases 1-3, based on $\Rep(S_t)$. Cases 4-8 based on $\Rep(GL_t)$, $\Rep(O_t)$, 
$\Rep(Sp_{2t})$ and many other similar situations will be considered in future papers. 
\footnote{The ideas explored in this paper were outlined in the talk \cite{E} in March 2009.}

This approach leads to a multitude of new interesting representation categories, 
which, in a sense, capture the phenomenon of ''stabilization with respect to rank" 
in representation theory of classical groups and algebras.
In subsequent works, we plan to study the structure of these categories in detail. 
We expect to discover interesting degeneration phenomena not only at integer $t$, but also at rational 
(non-integer) values of $t$. Specifically, if the ''classical" theory already has a continuous parameter $k$ (or several such parameters), 
then we expect interesting degeneration phenomena when both $k$ and $t$ are rational but not necessarily integer. 
Such phenomena cannot be understood by interpolation arguments, and their study will likely require new ideas. 
For instance, for rational Cherednik algebras, such degeneration phenomena were discovered 
in \cite{EA1}. Also, one might hope that these new categories will be interesting for the theory of categorification. 
 
For simplicity, we consider only the case $t\notin \Bbb Z_+$, when the Deligne categories $\Rep(S_t)$ are semisimple, although 
many of the results admit generalizations to the case $t\in \Bbb Z_+$. 

The organization of the paper is as follows. 

In Section 2 we recall basic facts about the Deligne category $\Rep(S_t)$. In particular, we 
define the group algebra $\Bbb C[S_t]$ of $S_t$ and discuss its center, and its action on simple objects of $\Rep(S_t)$. 

In Section 3, we discuss the Schur-Weyl dualty for $\Rep(S_t)$,
using the notion of a complex tensor power of a vector space 
with a distinguished nonzero vector. 

 In Section 4, we apply the Schur-Weyl duality from Section 3 to 
define and study wreath products of complex rank. 

Finally, in Section 5 we discuss interpolations of degenerate affine Hecke algebras 
and of wreath product symplectic reflection algebras, in particular rational 
Cherednik algebras. 

\begin{remark}
The world of representation categories of complex rank is vast and almost unexplored, 
and there are a multitude of projects that immediately spring to mind, to interpolate 
various settings in classical representation theory. Many of them seem interesting, and it is not yet clear which 
ones are most worthy of exploration. 

It seems that a good strategy of choosing problems 
in this field at initial stages would be to keep closer to applications to ''classical" representation theory.
One of the pathways for such applications is to consider the algebra $A=\End(X)$, where $X$ is an 
(ind-)object of some complex rank category $\mathcal{C}$. This is an ordinary algebra that belongs to the world of ''classical" representation theory, 
but its construction as $\End(X)$ is sometimes more insightful than ''classical" constructions, if they exist at all. 
In particular, through such a construction, $A$ comes equipped with a large family
of modules, namely $\Hom(X,Y)$, where $Y$ is another object of ${\mathcal C}$. 
  
Another principle that might be useful is to focus on extracting concrete numerical information (such as characters, multiplicities, etc.), as this has been a fundamental principle of representation theory since its creation. To this end, it may be useful to relate complex rank categories 
to ''classical" representation categories through various functorial constructions, such as Schur-Weyl duality and its generalizations.    
\end{remark}

{\bf Acknowledgments.} The author is grateful to I. Entova-Aizenbud and V. Ostrik for many useful discussions. 
The work of the author was  partially supported by the NSF grants
DMS-0504847 and DMS-1000113.

\section{Deligne categories $\Rep(S_t)$}

\subsection{Categorical preliminaries}

All categories we consider will be additive and linear over $\Bbb C$.  

Recall that a Karoubian category is an additive category closed under taking direct summands.  

By a tensor category, unless otherwise specified, we will mean a Karoubian rigid monoidal category 
with additive tensor product. 

For a category ${\mathcal {C}}$ denote by ${\rm Ind}{\mathcal{C}}$ the ind-completion of ${\mathcal{C}}$. 
If ${\mathcal C}$ is semisimple, objects of ${\rm Ind}({\mathcal{C}})$ are possibly infinite 
direct sums of indecomposable objects of ${\mathcal C}$. 
By an ind-object in ${\mathcal{C}}$ we will mean an object of ${\rm Ind}{\mathcal{C}}$. 

If ${\mathcal C}$, ${\mathcal D}$ are semisimple categories, by ${\mathcal C}\boxtimes {\mathcal D}$ 
we denote their external tensor product, -- a semisimple category whose simple objects have the form
$X\otimes Y$, where $X$ runs over simple objects of ${\mathcal C}$
and $Y$ over simple objects of ${\mathcal D}$.  It is clear that if ${\mathcal C}, {\mathcal D}$ are tensor categories 
then the category ${\mathcal C}\boxtimes {\mathcal D}$ is also a tensor category. 

\subsection{Definition of the Deligne category ${\rm Rep}(S_t)$ and its basic properties}

Let us recall basic facts about the 
Deligne category $\Rep(S_t)$ (\cite{De1}). 
\footnote{A good source of materials on Deligne categories is the webpage of the MIT seminar on Deligne categories, http://math.mit.edu/\~{}innaento/DeligneCatSeminar/}
This is a Karoubian rigid tensor category 
over $\Bbb C$ defined for any complex number $t$. 
Indecomposable objects $X_\lambda$ of the category $\Rep(S_t)$ are labeled by 
all Young diagrams $\lambda$.  

If $t$ is not a nonnegative integer, then 
$\Rep(S_t)$ is semisimple abelian, and its 
simple objects $X_\lambda$ 
are labeled by all Young diagrams (or partitions) $\lambda$. 
In particular, the empty diagram corresponds to the 
neutral object $\bold 1$, the one-box diagram to the ``reflection
representation'' $\h_0$, the column of length $k$ to $\wedge^k\h_0$,
etc. The object $X_\lambda$ interpolates the representations of
$S_n$ corresponding to the partition
$$
\widetilde{\lambda}(n):=(n-|\lambda|,\lambda_1,...,\lambda_m,...)
$$ 
of $n$, for large $n$. 

On the other hand, if $t$ is a nonnegative integer $n$, then $\Rep(S_t)$ projects onto the 
category of representations of the symmetric group ${\bold{Rep}}(S_n)$,
whose simple objects are the irreducible representations $\pi_\mu$ 
of $S_n$ attached to Young diagrams (or partitions) $\mu$ with $|\mu|=n$.
Namely, $\Rep(S_t)$ contains a tensor ideal ${\mathcal I}_n$, such that 
$\Rep(S_t)/{\mathcal I}_n={\bold{Rep}}(S_n)$. The partitions $\lambda$ for which 
$X_\lambda$ have a nonzero image in the quotient are those 
for which $\lambda_1+|\lambda|\le n$, and for such $\lambda$ 
the object $X_\lambda$ maps to $\pi_{\widetilde{\lambda}(n)}\in {\bold{Rep}}(S_n)$. 

Deligne also defined 
the abelian category $\Rep^{ab}(S_t)$, which is ``the abelian envelope''
of $\Rep(S_t)$ (it differs from $\Rep(S_t)$ only for nonnegative 
integer $t$). 

The categorical dimensions of indecomposable 
objects in the category $\Rep(S_t)$ for $t\notin \Bbb Z_+$ are given by the
following formula. For each partition $\lambda$ with
$|\lambda|=N$, let $\lambda^*$ be the conjugate partition, and 
define the set $B_\lambda$ to be the set of all nonnegative 
integers not contained in the sequence $N-1+k-\lambda_k^*$, 
$k\ge 1$ (since this sequence is increasing, it is clear 
that $|B_\lambda|=N$). Then we have 

\begin{proposition}\label{dimfor} (see \cite{De2}, \cite{CO}) If $t\notin \Bbb Z_+$, then 
$$
\dim X_\lambda=\dim \pi_\lambda 
\frac{\prod_{k\in B_\lambda}(t-k)}{N!}=\frac{\prod_{k\in B_\lambda}(t-k)}{\prod_{(i,j)\in \lambda}h_\lambda(i,j)}, 
$$ 
where $h_\lambda(i,j)$ are the hooklengths in $\lambda$.
\end{proposition}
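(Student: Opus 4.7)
The two claimed equalities are equivalent via the ordinary hook length formula $\dim \pi_\lambda = N!/\prod_{(i,j)\in\lambda} h_\lambda(i,j)$, so it suffices to prove the second one. My plan is to proceed by polynomial interpolation in $t$, reducing the statement to a combinatorial identity for hook lengths of the padded partition $\tilde\lambda(n)$ at integer $t=n \gg 0$.

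First I would note that $\dim X_\lambda$, viewed as a function of $t$, is a polynomial in $t$. This is intrinsic to Deligne's construction of $\Rep(S_t)$: morphism spaces have a basis of partition diagrams whose composition introduces a factor of $t$ for each closed loop, so the trace of any idempotent cutting out $X_\lambda$ is a polynomial in $t$. (Moreover its degree is bounded by $N=|\lambda|$, matching the right hand side.) Second, I would invoke the fact recalled in the excerpt that for every integer $n\ge |\lambda|+\lambda_1$ the quotient tensor functor $\Rep(S_t)\to \bold{Rep}(S_n)$ sends $X_\lambda$ to the simple module $\pi_{\tilde\lambda(n)}$. Since any tensor functor preserves categorical dimensions and the categorical dimension of $\pi_{\tilde\lambda(n)}$ agrees with its ordinary dimension, we have $\dim X_\lambda|_{t=n}=\dim \pi_{\tilde\lambda(n)}$ for all such $n$. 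As two polynomials in $t$ agreeing at infinitely many points, the identity
$$\dim X_\lambda=\frac{\prod_{k\in B_\lambda}(t-k)}{\prod_{(i,j)\in\lambda}h_\lambda(i,j)}$$
will follow once it is checked at every sufficiently large positive integer $t=n$.

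The remaining step is therefore a combinatorial verification of
$$\dim \pi_{\tilde\lambda(n)}=\frac{\prod_{k\in B_\lambda}(n-k)}{\prod_{(i,j)\in\lambda}h_\lambda(i,j)}$$
via the hook length formula $\dim \pi_{\tilde\lambda(n)}=n!/\prod_{(i,j)\in \tilde\lambda(n)} h_{\tilde\lambda(n)}(i,j)$. I would observe that a cell $(i,j)$ of $\tilde\lambda(n)$ with $i\ge 2$ has the same arm and leg as the cell $(i-1,j)$ in $\lambda$, so the hook lengths in the bottom part of $\tilde\lambda(n)$ reproduce exactly the hook lengths of $\lambda$. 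Thus the formula reduces to showing
$$\prod_{j=1}^{n-N} h_{\tilde\lambda(n)}(1,j)\cdot \prod_{k\in B_\lambda}(n-k) \;=\; n!.$$
A direct computation gives $h_{\tilde\lambda(n)}(1,j)=(n-N-j)+\lambda^*_j+1 = n-(N-1+j-\lambda^*_j)$, so the first factor equals $\prod_{a\in A_n}(n-a)$, where $A_n=\{N-1+j-\lambda^*_j:\,1\le j\le n-N\}$. For $n$ large, $A_n$ is exactly the intersection of the increasing sequence $\{N-1+k-\lambda^*_k:k\ge 1\}$ with $\{0,1,\dots,n-1\}$; hence $A_n$ and $B_\lambda$ partition $\{0,1,\dots,n-1\}$, and the displayed identity collapses to $\prod_{k=0}^{n-1}(n-k)=n!$.

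The one step that needs genuine care is the combinatorial identification of $A_n$ with the complement of $B_\lambda$ in $\{0,\dots,n-1\}$: one must check both that every $N-1+k-\lambda^*_k$ with $k\le n-N$ lies in $\{0,\dots,n-1\}$, and conversely that no value $N-1+k-\lambda^*_k$ with $k>n-N$ drops below $n$. Both verifications follow by tracking the two regimes $k\le \lambda_1$ and $k>\lambda_1$ (where $\lambda^*_k=0$), and it is precisely the hypothesis $n\ge |\lambda|+\lambda_1$ that ensures the two finite sequences line up correctly. Apart from this bookkeeping, the argument is routine.
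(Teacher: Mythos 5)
Your proposal is correct and is essentially the paper's own argument spelled out in full: the paper proves the formula precisely "by interpolating the hooklength formula for the dimensions of $\pi_{\widetilde{\lambda}(n)}$," and you supply the polynomiality-in-$t$ step, the evaluation at large integers via the quotient functor to ${\bold{Rep}}(S_n)$, and the hook-length bookkeeping identifying $\{N-1+j-\lambda^*_j\}$ with the complement of $B_\lambda$ in $\{0,\dots,n-1\}$. The only nitpick is that the trace of the idempotent cutting out $X_\lambda$ is a priori only a rational function of $t$ (the idempotent's coefficients depend rationally on $t$); but a rational function taking integer values at all large integers is a polynomial, which is exactly the observation the paper itself makes elsewhere, so the argument stands.
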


This formula is obtained 
by interpolating the hooklength formula for the dimensions of
$\pi_{\widetilde{\lambda}(n)}$.

\begin{example}\label{dimforex}
1. Let $\lambda=(1^k)$ ($k$ times).
Then $\dim X_\lambda=\frac{(t-1)...(t-k)}{k!}=\binom{t-1}{k}$. 

2. Let $\lambda=(k)$. Then $\dim X_\lambda=\frac{(t-2k+1)\prod_{j=0}^{k-2}(t-j)}{k!}=\binom{t}{k}-\binom{t}{k-1}$.
\end{example} 

\begin{remark} Formula (\ref{dimfor}) is not always true for nonnegative integer $t$.
For instance, as we see from Example \ref{dimforex}(2), the polynomial 
giving $\dim X_{(k)}$ for generic $t$ is negative for $t=k-1,...,2k-2$. 
This means that $\dim X_{(k)}$ cannot be given by 
this formula for these values of $k$ (as then it will have nothing to be mapped to in the category 
of representations of the symmetric group, as the functor from $\Rep(S_t)$ to this category is a symmetric tensor functor and hence 
preserves dimensions). In fact, one can show (see \cite{CO}) that $X_\lambda$ for positive integers $t$ 
is not always a specialization of a simple object for generic $t$; its lift to generic $t$ may be reducible.   
\end{remark}

It is also useful to recall the rule of multiplication by $\h_0$. 

\begin{proposition} (see \cite{De2},\cite{CO})
One has 
$$
\h_0\otimes X_\lambda=\oplus_{\mu\in P_\lambda^+\cup
P_\lambda^-\cup P_\lambda^0}X_\mu+{\rm cc}(\lambda)X_\lambda,  
$$
where $P_\lambda^+, P_\lambda^-, P_\lambda^0$ are the sets 
of Young diagrams obtained from $\lambda$ by adding, deleting,
and moving a corner cell, respectively, and ${\rm cc}(\lambda)$ 
is the number of corner cells of $\lambda$. 
\end{proposition}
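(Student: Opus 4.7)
The approach is to deduce the identity by interpolation from classical $S_n$ representation theory. Two structural properties of $\Rep(S_t)$ are used: for generic $t\notin\Bbb Z_+$, the multiplicity of any $X_\mu$ in $\h_0\otimes X_\lambda$ is an integer independent of $t$; and each $X_\nu$ has nonzero image $\pi_{\widetilde{\nu}(n)}$ in $\bold{Rep}(S_n)$ for all sufficiently large $n$ (both are standard facts on Deligne categories, cf.\ \cite{De2, CO}). Hence, to determine the multiplicity attached to a given $\mu$, it suffices to compute, in $\bold{Rep}(S_n)$ for one large enough $n$, the classical multiplicity of $\pi_{\widetilde{\mu}(n)}$ in $(\CC^n-\bold 1)\otimes\pi_{\widetilde{\lambda}(n)}$; I would take $n>2(|\lambda|+\lambda_1)+2$, so that $\nu\mapsto\widetilde{\nu}(n)$ is injective on the relevant partitions and each such $\widetilde{\nu}(n)$ has a genuine corner at the end of its first row.

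For such $n$, using $\CC^n=\Ind_{S_{n-1}}^{S_n}\bold 1$ together with the projection formula and the classical branching rule for $S_{n-1}\subset S_n$,
$$\CC^n\otimes\pi_{\widetilde{\lambda}(n)}=\Ind_{S_{n-1}}^{S_n}\text{Res}_{S_{n-1}}^{S_n}\pi_{\widetilde{\lambda}(n)}=\bigoplus_\nu\pi_\nu,$$
where $\nu$ ranges (with multiplicity) over partitions of $n$ obtained from $\widetilde{\lambda}(n)$ by removing a corner and then adding a corner. I would then stratify the (remove, add) pairs by the operation they induce on $\lambda$, obtaining four cases: (a) removing a corner of the $\lambda$-part and appending a cell at the end of row $1$ of $\widetilde{\lambda}(n)$ gives $\mu\in P_\lambda^-$; (b) removing the end-of-row-$1$ cell of $\widetilde{\lambda}(n)$ and adding a corner inside the $\lambda$-part gives $\mu\in P_\lambda^+$; (c) removing a corner of the $\lambda$-part and adding a cell at a different addable position in the $\lambda$-part gives $\mu\in P_\lambda^0$; and (d) removing and immediately replacing the same corner contributes $\pi_{\widetilde{\lambda}(n)}$ with multiplicity $\text{cc}(\widetilde{\lambda}(n))=\text{cc}(\lambda)+1$, the $+1$ being the artificial corner at the end of row $1$. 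Subtracting the $\bold 1\otimes\pi_{\widetilde{\lambda}(n)}$ summand then leaves $\pi_{\widetilde{\lambda}(n)}$ with net multiplicity $\text{cc}(\lambda)$, while each of the three families $P_\lambda^{\pm,0}$ contributes multiplicity one.

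Lifting back to $\Rep(S_t)$ via the $t$-independence of multiplicities yields the claim. The main obstacle is the combinatorial bookkeeping in the case analysis: one must check that the four cases partition all valid (remove, add) pairs without omission or overlap, and carefully distinguish the single artificial row-$1$ corner of $\widetilde{\lambda}(n)$ from the $\text{cc}(\lambda)$ corners inherited from $\lambda$, whose ``self-replacement'' multiplicities are together exactly what is balanced by the subtraction of $\bold 1$. Once this matching is done, the formula follows immediately.
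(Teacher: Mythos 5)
Your argument is correct and is exactly the route the paper takes: the paper's entire ``proof'' is the one-line remark that the formula is obtained by interpolating the Pieri rule (citing \cite{De2},\cite{CO}), and your computation of $\CC^n\otimes\pi_{\widetilde{\lambda}(n)}=\Ind_{S_{n-1}}^{S_n}\Res_{S_{n-1}}^{S_n}\pi_{\widetilde{\lambda}(n)}$ for large $n$, followed by subtracting the trivial summand and sorting the (remove, add) pairs into the $P_\lambda^{\pm},P_\lambda^0$ and self-replacement cases, is precisely that interpolation carried out in detail. The case analysis and the accounting of the extra first-row corner are correct.
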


This formula is obtained by interpolating the Pieri rule. 

\subsection{The universal property, induction, and restriction}

For $t\notin \Bbb Z_+$, the category $\Rep(S_t)$ is 
known (see \cite{De1}, Section 8) 
to be the universal symmetric tensor category with a commutative 
Frobenius algebra $\h$ of dimension $t$. This means that 
for any symmetric tensor category $\mC$, tensor functors 
$\Rep(S_t)\to \mC$ correspond of commutative Frobenius 
algebras in $\mC$ of dimension $t$. 

Now let $t_1,...,t_m\in \Bbb C$ be such that $\sum_{i=1}^mt_i=t$. 
Consider the category $\boxtimes_{i=1}^m \Rep(S_{t_i})$. 
In this category we have the commutative Frobenius algebra 
$\h_1\oplus...\oplus \h_m$, where 
$$
\h_i:=\bold 1^{\boxtimes i-1}\boxtimes 
\h_{\Rep(S_{t_i})}\boxtimes \bold 1^{\boxtimes m-i}.
$$
The dimension of this algebra is $t$. 
So by virtue of the universal property, we 
have a symmetric tensor functor 
$$
{\rm Res}_{t_1,...,t_n}: \Rep(S_t)\to \boxtimes_{i=1}^m\Rep(S_{t_i}),
$$
such that ${\rm Res}_{t_1,...,t_n}(\h)=\h_1\oplus...\oplus \h_m$.
The left (respectively, right) adjoint of this functor 
is called the induction (respectively, coinduction)  functor, and denoted by ${\rm (Co)Ind}_{t_1,...,t_m}$
(it lands in the ind-completion of $\Rep(S_t)$). 
In the special case $m=2$ and $t_2\in \Bbb Z_+$ (with ${\bold{Rep}}(S_{t_2})$ instead of $\Rep(S_{t_2})$), 
these functors are considered 
in \cite{De1}.

\subsection{The Jucys-Murphy central element}

Recall that the Jucys-Murphy central element 
is the central element $\Omega=\sum_{1\le i<j\le n}s_{ij}$ in $\Bbb C[S_n]$, where 
$s_{ij}$ is the transposition of $i$ and $j$. It is well
known that $\Omega$ acts on an irreducible representation $\pi_\mu$
by the scalar ${\rm ct}(\mu)$ (the content of $\mu$),
which is the sum of $i-j$ over all cells $(i,j)$ of 
the Young diagram $\mu$. 

The interpolation of the Jucys-Murphy central element to complex values of
$t$ (as an endomorphism of the identity functor of the category
$\Rep(S_t)$) is constructed in the paper \cite{CO} (as a special case of one-cycle central elements, see below). 
When $t\notin \Bbb Z_+$, t is easy to describe this endomorphism explicitly. Indeed,  
it is easy to see that 
$$
{\rm ct}(\widetilde{\lambda}(n))={\rm ct}(\lambda)-|\lambda|+
\frac{(n-|\lambda|)(n-|\lambda|-1)}{2}.
$$
So we define the endomorphism $\Omega$ by the formula  
$$
\Omega|_{X_\lambda}:={\rm ct}(\lambda)-|\lambda|+
\frac{(t-|\lambda|)(t-|\lambda|-1)}{2}.
$$

\subsection{Higher central elements}

In a similar way one can interpolate other central elements 
of the group algebra $\Bbb C[S_n]$, corresponding to various cycle types. 
Namely, let ${\bold m}=(m_1,m_2,...)$ be a sequence of nonnegative integers with $\sum m_i(i+1)\le n$. 
Let $m=\sum m_i(i+1)$. Then in the group algebra 
$\Bbb C[S_n]$ we have the central element $\Omega_{\bold m}$, 
which is the sum of all permutations with $m_i$ cycles of length $i+1$ for each $i\ge 1$. 
The eigenvalue of $\Omega_{\bold m}$ on $\pi_\lambda$ 
equals 
$$
\Omega_{\bold m}|_{\pi_\lambda}= \frac{|C_{\bold m}|\cdot {\rm tr}|_{\pi_\lambda}(g)}{\dim \pi_\lambda},
$$
where $C_{\bold m}$ is the conjugacy class of permutations with $m_i$ cycles of length $i+1$ for each $i\ge 1$, and 
$g\in C_{\bold m}$. We have 
$$
|C_{\bold m}|=\frac{n(n-1)...(n-m+1)}{\prod_i m_i! (i+1)^{m_i}}.
$$
This implies that the interpolation of $\Omega_{\bold m}$ to $\Rep(S_t)$ is given by the formula
$$
\Omega_{\bold m}|_{X_\lambda}=\frac{\prod_{(i,j)\in \lambda}h_\lambda(i,j)\cdot \prod_{j=0}^{m-1}(t-j)}{\prod_i m_i! (i+1)^{m_i} \cdot 
\prod_{k\in B_\lambda}(t-k)}c_{\lambda,\bold m}(t),
$$
where $c_{\lambda,\bold m}(t)$ is the coefficient of $x^\lambda:=\prod_i x_i^{\lambda_i}$ 
in the series 
$$
F_{t,{\bold m}}(x):=(1+p_1(x))^{t-m}\prod_{i\ge 1}
(1+p_{i+1}(x))^{m_i}\prod_{i\ge 1} (1-x_i)\prod_{i>j}(1-\frac{x_i}{x_j}),
$$
where $p_i(x)=\sum_r x_r^i$
(it is clear that $c_{\lambda,{\bold m}}(t)$ is a polynomial, 
by setting $x_i=u_1...u_i$ and writing the expansion in terms of
the $u_i$). This interpolation is obtained from the Frobenius character formula and the hooklength formula 
for $S_n$-representations, and for the case of one cycle it is given in \cite{CO}. In particular, $\Omega_{1,0,0...}=\Omega$ 
(the Jucys-Murphy element).\footnote{Note that the element $\Omega_{\bold m}$ is a polynomial 
of the elements $Z_1,Z_2,...$, where $Z_i:=\Omega_{\bold e_i}$ are the one-cycle central elements discussed in \cite{CO} 
($(\bold e_i)_j=\delta_{ij}$). For this reason, the one-cycle elements suffice for the study of blocks of $\Rep(S_t)$, done in \cite{CO}; 
more general central elements don't carry additional information.} 

Note that $\Omega_{\bold m}|_{X_\lambda}$, as well as $\dim X_\lambda$, is an integer-valued polynomial of $t$ (i.e., an integer linear combination of 
binomial coefficients). Indeed, this function takes integer values at large positive 
integers $t$ (by representation theory of symmetric groups), and such a rational function is well known to be an integer-valued polynomial. 

\subsection{The group algebra $\Bbb C[S_t]$ of $S_t$}

The representation category of the symmetric group $S_n$ may be defined as the tensor category of representations 
of the Hopf algebra $\Bbb C[S_n]$. In the setting of Deligne categories, such an algebra 
can also be defined, as a Hopf algebra in $\Rep(S_t)$ (which interpolates the group algebra of $S_n$ with the conjugation action of $S_n$). 
The only caveat is that since $|S_n|=n!$ is not a polynomial in $n$, this algebra will be infinite dimensional (i.e., an ind-object of $\Rep(S_t)$). 

Namely, let us fix a cycle type $\bold m$ as above. Then we have the conjugacy class $C_{\bold m}$ in $S_n$, 
and the span of this conjugacy class in $\Bbb C[S_n]$ is a representation of $S_n$ (with the conjugation action). 
This representation is the induced representation 
$$
{\rm Ind}_{S_{n-m}\times \prod S_{m_i}\ltimes (\Bbb Z/(i+1)\Bbb
  Z)^{m_i}}^{S_n}\Bbb C.
$$ 
(where $S_{m_i}\ltimes (\Bbb Z/(i+1)$ is viewed as a subgroup of $S_n$) 
This induced representation has a natural analog in $\Rep(S_t)$, namely the invariants $E_{\bold m}$ 
of the group $\prod_i S_{m_i}\ltimes (\Bbb Z/(i+1)\Bbb Z)^{m_i}$ in the object $\Delta_{m,t}={\rm Ind}_{S_{t-m}}^{S_t}(\bold 1)$
(where ${\rm Ind}_{S_{t-m}}^{S_t}$ is the induction functor $\Rep(S_{t-m})\to \Rep(S_t)$). 
We also have a natural multiplication map 
$$
{\rm mult}_{\bold m,\bold m',\bold m''}: E_{\bold m}\otimes E_{\bold m'}\to E_{\bold m''},
$$ 
which interpolates the multiplication in $\Bbb C[S_n]$
(i.e., in the classical setting for $s\in C_{\bold m}$, $s'\in C_{\bold m'}$ one has ${\rm mult}_{\bold m,\bold m',\bold m''}(s,s')=ss'$ 
if $ss' \in C_{\bold m''}$, and zero otherwise, and we need to interpret this in terms of the diagrams of the partition algebra). 
Moreover, it is clear that for fixed $\bold m$ and $\bold m'$ this map 
is zero for almost all $\bold m''$. This shows that $\Bbb
C[S_t]:=\oplus_{\bold m}E_{\bold m}$ is an associative algebra in
${\rm Ind}\Rep(S_t)$. Note that this algebra is $\Bbb Z/2\Bbb
Z$-graded by ``parity of the permutation'', $\deg(E_{\bold m})=\sum_i im_i \text{ mod }2$. 

Moreover, $\Bbb C[S_t]$ is a cocommutative Hopf algebra, in
which the coproduct $\Delta: E_{\bold m}\to E_{\bold m}\otimes
E_{\bold m}$ and counit $\varepsilon: E_{\bold m}\to \bold 1$ come from the natural commutative algebra structure 
on $E_{\bold m}^*$ (note that $E_{\bold m}^*$ interpolates the space of functions on $C_{\bold m}$). It is also easy to construct the antipode 
$S: E_{\bold m}\to E_{\bold m}$, interpolating the inversion map 
on $S_n$. Namely, this is the antiautomorphism 
which is the identity on $E_{1,0,...}$. Thus, the category $\Bbb
C[S_t]$-mod of $\Bbb C[S_t]$-modules in ${\rm Ind}\Rep(S_t)$ is a
symmetric tensor category. 

Note that we have a canonical tensor functor 
$\Rep(S_t)\to \Bbb C[S_t]$-mod; in particular, any 
object of $\Rep(S_t)$ carries a canonical action 
of $\Bbb C[S_t]$. Indeed, to prescribe such a functor, 
it suffices to specify a Frobenius algebra in $\Bbb C[S_t]$-mod 
of dimension $t$. For this, it suffices to define a Hopf 
action of $\Bbb C[S_t]$ on the Frobenius algebra $\h$, which is done in
a straightforward way by interpolating from integer $n$ 
the standard action of $\Bbb C[S_n]$ on 
${\rm Fun}(\lbrace{1,...,n\rbrace})$. 

\begin{remark}\label{semidir} 
Let $B$ be an algebra in ${\rm IndRep}(S_t)$. Then, since $B$ is an object of 
${\rm IndRep}(S_t)$, there is a standard action of $\Bbb C[S_t]$ 
on $B$. It is easy to check that this is a Hopf algebra 
action of the Hopf algebra $H:=\Bbb C[S_t]$ on the algebra $B$  inside ${\rm IndRep}(S_t)$,
$\rho: H\otimes B\to B$. 
Thus, we can form the semidirect product $\Bbb C[S_t]\ltimes B$, as we do in the
theory of Hopf algebra actions on rings. Namely, as an object, this 
semidirect product is $B\otimes H$, with multiplication map 
$m: B\otimes H\otimes B\otimes H\to B\otimes H$ is defined 
by the formula
$$
m=(m_H\otimes m_B)\circ (1\otimes \rho\otimes 1\otimes 1)\circ \sigma_{34}\circ (1\otimes \Delta_H\otimes 1\otimes 1),
$$
where $m_B,m_H$ are the products in $B,H$, $\Delta_H$ is the coproduct in $H$, 
and $\sigma_{34}$ is the permutation of the 3d and 4th components. 
Now, the category of $B$-modules in ${\rm IndRep}(S_t)$ is tautologically equivalent to 
the category of $\Bbb C[S_t]\ltimes B$-modules, in which 
$\Bbb C[S_t]$ acts via the standard action.
\footnote{Note that the latter requirement is 
needed, as there are many ${\Bbb C}[S_t]$-modules 
in which the action of $\Bbb C[S_t]$ is different from the standard 
action of $\Bbb C[S_t]$ 
on the underlying object of ${\rm IndRep}(S_t)$, see Subsection \ref{hcmod} below.} 
Moreover, if $B$ is a Hopf algebra then so is $\Bbb C[S_t]\ltimes B$, and the above equivalence 
of categories is a tensor equivalence. 
\end{remark}

\begin{remark}
It is easy to see that for each $\bold m$, $E_{\bold m}$ has 
a unique invariant up to scaling, which we denote by $\Omega_{\bold m}$. 
This notation is justified by the fact that the 
aforementioned canonical action of this invariant
on $X_\lambda$ coincides with the endomorphism 
$\Omega_{\bold m}$ defined in the previous subsection. 
Thus, ${\rm Hom}(\bold 1,\Bbb C[S_t])={\rm
  Span}(\lbrace{\Omega_{\bold m}\rbrace})$
is a commutative algebra with basis 
$\Omega_{\bold m}$. It is easy to show that 
this algebra is the polynomial algebra 
$\Bbb C[Z_1,Z_2,...]$ on the generators $Z_j=\Omega_{j,0,0,...}$, 
a fact exploited in \cite{CO}.  
\end{remark}

\subsection{A presentation of $\Bbb C[S_t]$ by generators and relations}
It is well known that the group algebra 
$\Bbb C[S_n]$ with generators 
being simple reflections $s_{ij}$
is a quadratic algebra. Namely, the defining relations are: 
\begin{equation}\label{snrel}
s_{ij}^2=1,\ s_{ij}s_{jk}=s_{ik}s_{ij},\
s_{ij}s_{kl}=s_{kl}s_{ij}, 
\end{equation}
where different subscripts denote different indices. 
It is easy to interpolate this
presentation to the case of complex rank, which would 
yield an inhomogeneous quadratic presentation 
of the algebra $\Bbb C[S_t]$, generated by
$E=E_{1,0,0,...}=\Delta_{2,t}^{S_2}$, valid at least for transcendental $t$.  

One may consider the filtration on $\Bbb C[S_t]$ 
defined by the condition that $\deg(E)=1$, and the 
associated graded algebra ${\rm gr}(\Bbb C[S_t])$. 
Similarly to the case of $S_n$ for integer $n$, this 
algebra is generated by the same generators with 
defining relations being the homogenization of the 
relations of $\Bbb C[S_t]$. 
Indeed, for $S_n$ the homogenized relations look like
$$
s_{ij}^2=0,\ s_{ij}s_{jk}=s_{ik}s_{ij},\
s_{ij}s_{kl}=s_{kl}s_{ij}, 
$$
and modulo these relations, any nonzero monomial 
in $s_{ij}$ may be rewritten as $s_{i_1j_1}...s_{i_mj_m}$, so
that the function ${\rm max}(i_l,j_l)$ strictly 
increases from left to right, and there are exactly $n!$ such
ordered monomials. This also shows that the ordered 
monomials are linearly independent, and that 
the quadratic algebra ${\rm gr}(\Bbb C[S_n])$ is Koszul
\footnote{I am grateful to Eric Rains for this remark.}
(the quadratic Gr\"obner basis is formed 
by the unordered quadratic monomials, i.e. 
$s_{ij}s_{pq}$ with ${\rm max}(i,j)\ge {\rm max}(p,q)$). 
By interpolation, the same statements are true for $\Bbb C[S_t]$,
at least for transcendental $t$. 

Also, one can show that the Hilbert series of the algebra ${\rm gr}(\Bbb C[S_t])$ 
is 
$$
h(t,x)=x^t\frac{\Gamma(x^{-1}+t)}{\Gamma(x^{-1})},
$$
where the function on the right should be replaced by its
asymptotic expansion at $x\to +\infty$. Note that 
formal substitution of $x=1$ on the RHS (``order of $S_t$'') gives
$h_t(1)=\Gamma(1+t)$, which is $t!$ for integer $t$, but this is
illegitimate, as the series on the right side has zero radius of
convergence, even though the algebra ${\rm gr}(\Bbb C[S_t])$ 
is ''finitely generated" (i.e. generated by an honest object, 
not just an ind-object, of $\Rep(S_t)$, namely the object $E$).
\footnote{Note that such a thing clearly cannot happen for ordinary finitely
generated graded algebras. Indeed, if $A$ is a graded algebra with 
generators $x_1,...,x_n$ of degrees $d_1,...,d_n$ then 
the radius of convergence of the Hilbert series $h_A(x)$  
is bounded below by the real root of the equation
$\sum_{i=1}^n x^{d_i}=1$ (since for the free algebra 
with such generators $h_A(t)=(1-\sum_{i=1}^n x^{d_i})^{-1}$.}     

\subsection{Modules over $\Bbb C[S_t]$}\label{hcmod}
If $G$ is a finite group then the category of finite dimensional $\Bbb C[G]$-modules
in ${\bold{Rep}}(G)$ is equivalent the category 
${\bold{Rep}}(G\times G)={\bold{Rep}}(G)\boxtimes {\bold{Rep}}(G)$.
Indeed, given $X,Y\in {\bold{Rep}}(G)$, we can take $X\otimes Y\in {\bold{Rep}}(G)$ 
and introduce the action of $\Bbb C[G]$ on it by acting on the first tensor factor, 
which gives the desired equivalence. 
However, the category ${\Bbb C}[S_t]$-fmod of finite dimensional 
${\Bbb C}[S_t]$-modules is not equivalent to 
$\Rep(S_t)\boxtimes \Rep(S_t)$. Indeed, ${\Bbb C}[S_t]$-fmod 
contains a nontrivial invertible object ${\bold s}$ (of order 2), which we may
call the sign representation, and which is $\bold 1$ as an object of 
$\Rep(S_t)$, with the action of $E_{\bold m}$ defined by 
the map $(-1)^{\sum_i im_i}\varepsilon$. 

However, we have the following proposition. 

\begin{proposition} 
If $t$ is transcendental, then the category ${\Bbb C}[S_t]$-fmod 
is equivalent to $\Rep(S_t)\boxtimes \Rep(S_t)\oplus \Rep(S_t)\boxtimes 
\Rep(S_t)\otimes {\bold s}$ as a tensor category. 
\end{proposition}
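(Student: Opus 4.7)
The plan is to interpolate the classical equivalence $\mathbb{C}[G]\text{-mod in }\Rep(G)\simeq \Rep(G\times G)$ valid for any finite group $G$; the extra summand $\Rep(S_t)\boxtimes \Rep(S_t)\otimes \mathbf{s}$ arises because the classical sign representation, which on the right hand side would correspond to $\mathrm{sign}\boxtimes \mathbf{1}$, is not an object of $\Rep(S_t)$ itself for transcendental $t$ and so must be reintroduced via the invertible sign module $\mathbf{s}$.

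First I would construct the tensor functor $F$. By the universal property recalled in Section 2.3, a tensor functor $\Rep(S_t)\to \mathbb{C}[S_t]\text{-fmod}$ is determined by a commutative Frobenius algebra of dimension $t$ in $\mathbb{C}[S_t]\text{-fmod}$. Taking $\h$ with the canonical standard action of Section 2.6 gives one such tensor functor $i_1$; taking $\h$ with the trivial action via the counit $\varepsilon$ gives a second one $i_2$. Their tensor product yields $F_+:\Rep(S_t)\boxtimes \Rep(S_t)\to \mathbb{C}[S_t]\text{-fmod}$, $X\boxtimes Y\mapsto X\otimes Y$, with $\mathbb{C}[S_t]$ acting on the first factor standardly and on the second trivially (combined through the coproduct). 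I then extend by $F(A\otimes \mathbf{s}):=F_+(A)\otimes \mathbf{s}$ on the sign-twisted summand; this is a tensor functor since $\mathbf{s}$ is invertible of order two.

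Full faithfulness reduces (by semisimplicity of source and target for transcendental $t$) to showing $\Hom_{\mathbb{C}[S_t]\text{-fmod}}(F_+(X_\mu\boxtimes X_\nu),F_+(X_{\mu'}\boxtimes X_{\nu'}))=\delta_{\mu\mu'}\delta_{\nu\nu'}\mathbb{C}$, with cross-sector $\Hom$'s vanishing. The key tool is the Hopf-algebraic rotation trick: given a module action $\rho_M$ on $M\in \Ind\Rep(S_t)$, the morphism $\rho_2:=\rho_{\mathrm{std}}\circ(\mathrm{id}\otimes \rho_M)\circ(\mathrm{id}\otimes S\otimes \mathrm{id})\circ(\Delta\otimes \mathrm{id}_M)$ defines a second $\mathbb{C}[S_t]$-action commuting with $\rho_M$, by cocommutativity. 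Applied to $F_+(X\boxtimes Y)$, the action $\rho_2$ acts standardly on the $Y$-factor, decoupling the two factors and reducing $\mathbb{C}[S_t]$-linear morphisms to tensor products of $\Rep(S_t)$-morphisms. Cross-sector vanishing follows from the distinctness of the trivial and sign characters of $\mathbb{C}[S_t]$ on the unit object.

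For essential surjectivity, let $M$ be a finite-dimensional simple $\mathbb{C}[S_t]$-module. The rotation trick gives $M$ two commuting $\mathbb{C}[S_t]$-actions, equivalently a $\mathbb{C}[S_t]\otimes \mathbb{C}[S_t]$-module structure. For transcendental $t$, the central elements $\Omega_{\mathbf{m}}$ of Section 2.5 have pairwise distinct eigenvalues on distinct $X_\lambda$, so both actions are diagonalized by a single pair $(X_\mu,X_\nu)$ of partitions; this identifies $M$ either with $F_+(X_\mu\boxtimes X_\nu)$ or with its $\mathbf{s}$-twist, according to whether the residual $\mathbb{C}[S_t]$-action on the unit component is trivial or sign. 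The main obstacle is this last step: verifying that the rotation construction really factors through the claimed subcategory rather than a larger one of $\mathbb{C}[S_t]\otimes\mathbb{C}[S_t]$-modules, and that the only additional $\mathbb{C}[S_t]$-characters arising are the trivial and sign. This requires the cocommutativity and Hopf-algebraic structure of $\mathbb{C}[S_t]$ from Remark \ref{semidir}, together with the transcendentality of $t$ to separate central characters.
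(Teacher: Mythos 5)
Your construction of the functor (standard action on the first factor, trivial action on the second, plus the ${\bold s}$-twist) and your explanation of why the extra summand appears are correct, and the rotation trick $\rho_2:=\rho_{\mathrm{std}}\circ(\mathrm{id}\otimes\rho)\circ(\mathrm{id}\otimes S\otimes\mathrm{id})\circ(\Delta\otimes\mathrm{id})$ is the right interpolation of the classical argument for a finite group $G$. But the essential surjectivity step has a genuine gap, and it sits exactly where the classical argument fails to interpolate. For finite $G$, once you have two commuting actions of $\Bbb C[G]$ on a finite-dimensional space, Wedderburn theory for the finite-dimensional semisimple algebra $\Bbb C[G]$ splits $M$ into summands $\pi_\mu\otimes\pi_\nu$. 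For $\Bbb C[S_t]$ no such structure theory is available: it is an infinite-dimensional ind-algebra whose center is the free polynomial ring $\Bbb C[Z_1,Z_2,\dots]$, so an arbitrary finite-dimensional module could a priori have any central character whatsoever. Your assertion that ``both actions are diagonalized by a single pair $(X_\mu,X_\nu)$'' silently assumes that the central character of the module action lies in the countable set of characters $\chi_\lambda$ (or their sign twists) attached to partitions; proving that for every finite-dimensional module is essentially equivalent to the proposition itself, and the paper's remark immediately after the proposition points out that exotic central characters genuinely occur for infinite-dimensional modules. Transcendence of $t$ separates the standard central characters from one another, but it does not by itself exclude non-standard ones. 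The rotation trick therefore only converts the problem into the equivalent one of classifying pairs of commuting actions whose convolution is the standard action; it does not solve it. The within-sector fullness claim (``decoupling \dots reduces morphisms to tensor products'') has the same issue in milder form.

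The paper closes this gap by an entirely different mechanism, which is worth contrasting with your plan. Using the inhomogeneous quadratic presentation of $\Bbb C[S_t]$ on the generator $E$, an action on a fixed object $X(N)$ is a point of a ``representation variety'' $Y_t$ inside a Hom-space $W$ that is independent of $t$, cut out by equations polynomial in $t$ with rational coefficients. Hence the locus of $t$ where $|Y_t/GL_N|$ takes a given finite value is semialgebraic over $\Bbb Q$, so a transcendental $t$ must behave like all sufficiently large integers $n$. For $t=n$ the count is classical: one enumerates decompositions of $X_n(N)$ as an $S_n\times S_n$-module, and Lemma \ref{comb} (the dimension bound forcing a long first row or long first column) shows that exactly the two expected sectors occur. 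If you want to salvage your Hopf-algebraic route --- which would be more intrinsic and might extend to all $t\notin\Bbb Z_+$ --- the missing ingredient is a direct proof that every finite-dimensional $\Bbb C[S_t]$-module has standard central character, i.e.\ a classification of finite-dimensional quotients of the algebra; this is precisely the ``more refined approach'' the paper says would be needed.
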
 

\begin{proof}
Let $N(\lambda)$ be a $\Bbb Z_+$-valued function on the set of all partitions 
which takes finitely many nonzero values. Let $X(N)=\oplus_\lambda N(\lambda)X_\lambda$ be the corresponding object of $\Rep(S_t)$. 
Our job is to show that for transcendental $t$, any action of $\Bbb C[S_t]$ on $X(N)$ 
comes from an object of $\Rep(S_t)\boxtimes \Rep(S_t)\oplus \Rep(S_t)\boxtimes 
\Rep(S_t)\otimes {\bold s}$. To this end, let us use the presentation of $\Bbb C[S_t]$ 
as a quotient of the tensor algebra $TE$ by the appropriate relations, introduced above. 
This presentation implies that an action of $\Bbb C[S_t]$ on $X(N)$ 
is determined by a morphism $\rho\in {\rm Hom}_{S_t}(E\otimes X(N),X(N))$ 
satisfying certain quadratic relations. Now observe that the space 
$W:={\rm Hom}_{S_t}(E\otimes X(N),X(N))$ is independent of $t$ 
(it has a basis given by certain diagrams in the partition algebra). 
Moreover, the quadratic relations for $\rho\in W$ depend polynomially on $t$.
These relations define a family of closed subvarieties $Y_t$ in $W$, $t\in \Bbb C$
("representation varieties"), with an action of the group $GL_N=GL_N(\Bbb C):=
\prod_\lambda GL_{N(\lambda)}(\Bbb C)$ by change of basis. 
This family is not necessarily flat; however, since the equations of $Y_t$ are polynomial in $t$ with rational coeffiicients, 
the set of $t$ for which $Y_t$ has a given finite number of $GL_N$-orbits is clearly semialgebraic, defined over $\Bbb Q$.
So if we show that $|Y_t/GL_N|$ is a certain fixed number $m(N)$ 
for sufficiently large integer $t$, it will follow that $|Y_t/GL_N|=m(N)$ 
 for any transcendental $t$. 
 
 To see that $|Y_t/GL_N|=m(N)$ for large integer $t=n$,
 note that, as explained above, $Y_t$ is the variety of representations 
 of $S_n\times S_n$ which restrict to $X_n(N):=\oplus_\lambda N(\lambda)\pi_{\widetilde{\lambda}(n)}$ (whose interpolation is $X(N)$)
  on the diagonal subgroup $(S_n)_{\rm diag}\subset S_n\times S_n$. 
 This variety clearly has finitely many orbits of $GL_N$, which implies the statement. 
 
 It remains to show that $m(N)$ equals the number of objects of $\Rep(S_t)\boxtimes \Rep(S_t)\oplus \Rep(S_t)\boxtimes 
\Rep(S_t)\otimes {\bold s}$ which map to $X(N)$ under the forgetful functor. 
To this end, we will use the following (well known) combinatorial lemma, whose proof is given in the 
appendix at the end of the paper. 

\begin{lemma}\label{comb} For each $C>0$ and $k\in \Bbb Z_+$ there exists $N=N(C,k)\in \Bbb Z_+$ 
such that for each $n\ge N$, if $V=\pi_\mu$ is an irreducible representation of $S_n$ 
such that $\dim V\le C n^k$, then either the first row or 
the first column of $\mu$ has length $\ge n-k$. 
\end{lemma}  

Now let $n$ be large.
Our job is to classify all ways to write $X_n(N)$ as $\oplus_{i=1}^p Y_i\otimes Y_i'$, where $Y_i,Y_i'$ are representations of $S_n$. 
Clearly, $p\le \sum_\lambda N(\lambda)$. Also, since $\dim X_n(N)$ is a polynomial of $n$ of some degree $k$, 
by Lemma \ref{comb}, for large $n$ there are only finitely many possibilities for $Y_i$ and $Y_i'$, and there 
are two options: either both have $\ge n-k$ boxes in the first row, or both have $\ge n-k$ boxes in the first column. This implies the required statement, and completes the proof of the proposition. 
\end{proof} 

\begin{remark} We expect that this proposition holds for all $t\notin \Bbb Z_+$, but 
proving this would require a more refined approach.
\end{remark}

On the other hand, there are many infinite dimensional $\Bbb C[S_t]$-modules 
(i.e. based on an ind-object of $\Rep(S_t)$)
which are not ind-objects of $\Rep(S_t)\boxtimes \Rep(S_t)$. 
Indeed, there are only a countable collection 
of possible eigenvalues of the center of $\Bbb C[S_t]$ 
on ind-objects of $\Rep(S_t)\boxtimes \Rep(S_t)$.  

The category $\Bbb C[S_t]$-mod of (possibly infinite dimensional) 
$\Bbb C[S_t]$-modules may be viewed as 
the category of ``Harish-Chandra bimodules'' 
for $S_t$ (as it is analogous to the category 
of Harish-Chandra bimodules for a semisimple Lie algebra).
It would be interesting to study this category in more detail.  

\section{Schur-Weyl duality for $\Rep(S_t)$}

In this section we discuss a Schur-Weyl duality for $\Rep(S_t)$, 
which generalizes the classical Schur-Weyl duality, and 
is based on the notion of a complex power of a vector space
with a distinguished vector, in the case $t\notin \Bbb Z_+$. 
Under this duality, objects of $\Rep(S_t)$ correspond to 
objects of a parabolic category ${\mathcal O}$ for ${\mathfrak{gl}}_n$. 
This is discussed in much more detail and for general $t$ (including $t\in \Bbb Z_+$) in the
forthcoming paper \cite{EA2}. 

\subsection{Unital vector spaces}

\begin{definition}
A unital vector space is a vector space
$V$ which has a distinguished nonzero vector $\mathbb 1$.
\end{definition} 

It is clear that unital vector spaces form a 
symmetric monoidal category under tensor product. 

Let $(V,\Bbb 1)$ be a unital vector space,
and let $\bar V:=V/\Bbb C\Bbb 1$.
Fix a splitting $\bar V\to V$, and denote its image by $U$;
thus, we have $V=\Bbb C\Bbb 1\oplus U$.
This gives an isomorphism ${\rm Aut}(V,\Bbb 1)\cong GL(U)\ltimes U^*$. 

For a partition $\lambda$, let $\Bbb S^\lambda$ be the
corresponding Schur functor on the category of vector spaces.
Define the induced representation of ${\rm Aut}(V,\Bbb 1)$
given by the formula 
$$
E_\lambda:={\rm Ind}_{GL(U)}^{GL(U)\ltimes U^*}\Bbb S^\lambda U,
$$
i.e., $E_\lambda=SU\otimes \Bbb S^\lambda U$,   
with the obvious action of $GL(U)$, and the action of $U^*$ given by 
differentiation in the first component. Note that 
the action of the Lie algebra 
${\rm Lie}({\rm Aut}(V,\Bbb 1))={\mathfrak{gl}}(U)\ltimes U^*$ 
on $E_\lambda$ naturally extends to an action of the Lie algebra 
${\mathfrak {gl}}(V)$; indeed, we can write 
$E_\lambda$ as 
$$
E_{\lambda}={\rm Hom}^{\rm res}_{{\mathfrak {gl}}(U)\ltimes U}
(U({\mathfrak {gl}}(V)),\Bbb S^\lambda U),
$$ 
where $U$ acts on $\Bbb S^\lambda U$ by zero, and the superscript ''res'' means that we are taking 
the restricted space of homomorphisms with respect to the grading in which $\deg(\Bbb 1)=0$, $\deg(U)=1$
(i.e., the space spanned by homogeneous homomorphisms). Thus, $E_\lambda$ is a 
module for the Harish-Chandra pair $({\mathfrak {gl}}(V),{\rm Aut}(V,\Bbb 1))$. 

\begin{proposition}\label{irre}
(i) The module $E_\lambda$ is 
the contragredient module 
$M(t-|\lambda|,\lambda)^\vee$ 
to the the parabolic Verma module 
$M(t-|\lambda|,\lambda)$ over ${\mathfrak {gl}}(V)$ with 
highest weight $(t-|\lambda|,\lambda)$ (integrating to the subgroup 
${\rm Aut}(V,\Bbb 1)$). 

(ii) If $t\notin \Bbb Z_+$, the module 
$E_\lambda$ is irreducible, and hence is
isomorphic to the Verma module 
$M(t-|\lambda|,\lambda)$. 
\end{proposition}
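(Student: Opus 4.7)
The plan is, for (i), to recognize $E_\lambda$ as the coinduction of $\mathbb{S}^\lambda U$ from the opposite parabolic $\bar{\mathfrak{p}}$ in $\mathfrak{gl}(V)$, which by the standard duality equals the contragredient of the parabolic Verma $M:=M(t-|\lambda|,\lambda)$; for (ii), to reduce the irreducibility of $E_\lambda$ to that of $M$, and then to classify the composition factors of $M$ in parabolic category $\mathcal{O}^\mathfrak{p}$.

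For part (i), grade $\mathfrak{gl}(V)$ by $\deg(\mathbb{1})=0$, $\deg(U)=1$, so that $\mathfrak{gl}(V)=U^*\oplus(\mathbb{C}\oplus\mathfrak{gl}(U))\oplus U$, and let $\mathfrak{p}=\mathbb{C}\oplus\mathfrak{gl}(U)\oplus U^*$, $\bar{\mathfrak{p}}=\mathbb{C}\oplus\mathfrak{gl}(U)\oplus U$. The $\Hom^{\mathrm{res}}$-description in the excerpt exhibits $E_\lambda$ as $\mathrm{Coind}_{\bar{\mathfrak{p}}}^{\mathfrak{gl}(V)}(\mathbb{S}^\lambda U)$, with $\mathbb{C}$ acting by $t-|\lambda|$ and $U$ trivially, while $M=\Ind_{\mathfrak{p}}^{\mathfrak{gl}(V)}(\mathbb{S}^\lambda U)$ is the parabolic Verma with the same Levi weight. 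Since the Chevalley anti-involution (matrix transpose) swaps $\mathfrak{p}$ and $\bar{\mathfrak{p}}$ while fixing the Levi, the PBW decomposition $U(\mathfrak{gl}(V))=S(U^*)\cdot U(\bar{\mathfrak{p}})$ produces a perfect pairing of weight spaces $M\times E_\lambda\to\mathbb{C}$ that intertwines the two actions, giving $E_\lambda\cong M^\vee$.

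For part (ii), since $E_\lambda\cong M^\vee$ is irreducible iff $M$ is, it suffices to show $M=L(t-|\lambda|,\lambda)$ for $t\notin\mathbb{Z}_+$. In $\mathcal{O}^\mathfrak{p}$, a composition factor $L(\nu)$ of $M$ corresponds to $\nu$ in the dot-Weyl orbit of $\mu:=(t-|\lambda|,\lambda)$ that is integrally Levi-dominant and satisfies $\nu\le\mu$ in the root order. Writing $\mu+\rho=(\alpha,b_1,\ldots,b_{n-1})$ with $\alpha=t-|\lambda|+n-1$ and $b_i=\lambda_i+n-1-i$ (a strictly decreasing sequence of nonnegative integers), I would split into cases. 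If $t\notin\mathbb{Z}$, then $\alpha$ is the only non-integer entry, so integrality of the Levi part of $\nu$ forces $\alpha$ to remain at position $0$; Levi-dominance then forces the underlying Weyl element to be the identity, and $\nu=\mu$. If $t\in\mathbb{Z}_{<0}$, then either $\alpha=b_k$ for some $k$ (a wall case in which the orbit shrinks and only $\mu$ survives as Levi-dominant), or $\alpha\neq b_i$ for all $i$; a gap analysis using that $\lambda_i>\lambda_{i+1}$ only occurs at indices $i\le|\lambda|$ yields $\alpha<b_{n-1}$, i.e.\ $\alpha$ is the smallest entry, and a direct partial-sum computation of $\mu-\nu$ confirms that every alternative permutation produces $\nu>\mu$ rather than $\nu<\mu$, so no new $L(\nu)$ enters as a composition factor.

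The main obstacle is the second case of part (ii): precluding that, for some $t\in\mathbb{Z}_{<0}$, $\alpha$ falls in a genuine interior gap among the $b_i$ without coincidence, which requires combining partition combinatorics ($\lambda_i=0$ for $i>|\lambda|$, hence no gaps beyond that index) with the inequality $t<0$; and then verifying via the partial-sum computation that the resulting alternative $\nu$ sits above $\mu$ rather than below, so that the hypothesis "$t\notin\mathbb{Z}_+$" is indeed the right one rather than a strictly stronger condition.
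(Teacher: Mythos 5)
Your proposal is correct. For part (i) it is essentially the paper's argument made precise: the paper just exhibits the highest weight vector $u$ and asserts the identification $E_\lambda\cong M(t-|\lambda|,\lambda)^\vee$, and your coinduction-from-$\bar{\mathfrak p}$/transpose-pairing argument is the standard way to justify that assertion. For part (ii) both proofs reduce, via equality of characters, to irreducibility of the parabolic Verma module $M=M(\mu)$, $\mu=(t-|\lambda|,\lambda)$, but the key input differs. The paper invokes the Jantzen determinant formula: reducibility forces a singular vector of weight $\mu-m\alpha$ with $\mu+\rho-m\alpha=s_\alpha(\mu+\rho)$ for a \emph{single} positive root $\alpha$; integrability of the resulting submodule over ${\rm Aut}(V,\Bbb 1)$ pins down $\alpha=e_1-e_i$, and then $t=|\lambda|+\lambda_i+m-i$ together with $\lambda_{i-1}\ge\lambda_i+m$ gives $t\ge i(m-1)\ge 0$ in one stroke, with no case analysis. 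You instead use only the linkage principle ($\nu+\rho\in S_n(\mu+\rho)$), the constraint $\nu\le\mu$, and $\mathfrak p$-dominant integrality of composition factors, at the cost of a case analysis on where $\alpha=t-|\lambda|+n-1$ sits among the $b_i=\lambda_i+n-1-i$. That analysis does close: in the interior-gap case $b_{i+1}<\alpha<b_i$ one has $\lambda_i>\lambda_{i+1}$, hence $|\lambda|\ge i$ and $t>|\lambda|+\lambda_{i+1}-i-1\ge -1$, and in the case $\alpha>b_1$ one gets $t\ge|\lambda|+\lambda_1\ge 0$; so for $t\in\Bbb Z_{<0}$ either $\alpha$ coincides with some $b_k$ (only $\mu$ is Levi-regular in the orbit) or $\alpha$ is strictly smallest (every other Levi-dominant $\nu$ has $\nu_1>\mu_1$, so $\nu\not\le\mu$). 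Your route is marginally more elementary, avoiding the determinant formula, while the paper's is shorter and produces the explicit integer $t=|\lambda|+\lambda_i+m-i$; the final arithmetic is the same inequality in both.
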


\begin{proof}
Let $u$ be a highest weight vector of $\Bbb S^\lambda U$ as 
a ${\mathfrak {gl}}(U)$-module (clearly, it has weight $\lambda$). 
Then $u$ is a highest weight vector for 
${\mathfrak {gl}}(V)$ of weight $(t-|\lambda|,\lambda)$,
since ${\rm Id}\circ u=tu$. So we have a natural homomorphism 
$M(t-|\lambda|,\lambda)\to E_\lambda$ sending the highest weight vector 
of $M(t-|\lambda|,\lambda)$ to $u$, which implies (i). 

The characters of the two modules 
are the same, so to prove (ii), it suffices to show that 
for $t\notin \Bbb Z_+$, the module $M(t-|\lambda|,\lambda)$ is irreducible.

Assume that $M(t-|\lambda|,\lambda)$ is reducible. 
Then it must contain a singular vector of weight 
$(t-|\lambda|,\lambda)-m\alpha$, where $\alpha$ is a positive root, and $m$ 
a positive integer (this follows, for instance, from the Jantzen
determinant formula for parabolic Verma modules). 
Then, setting $N=\dim V$ and $\rho=(N,N-1,...,1)$, we must have
$$
(t-|\lambda|,\lambda)+\rho-m\alpha=s_\alpha((t-|\lambda|,\lambda)+\rho).
$$ 
Since the submodule generated by the singular vector 
integrates to ${\rm Aut}(V,\Bbb 1)$, this implies that all the coordinates of the vector 
$$
s_\alpha((t-|\lambda|,\lambda)+\rho)
$$
except the first one form a strictly decreasing sequence. 
Thus, $\alpha=e_1-e_i$ for some $i$. 

Now let $\lambda$ be a partition 
with at most $\dim V-1$ parts and $n\ge
\lambda_1+|\lambda|$. If the Verma module 
$M(t-|\lambda|,\lambda)$ has a submodule with 
highest weight vector of weight
$(t-|\lambda|,\lambda)-m(e_1-e_i)$, 
then we must have 
$$
t-|\lambda|-m+N=\lambda_i+N-i,
$$
i.e., 
$$
t=|\lambda|+\lambda_i+m-i.
$$
Thus, $t$ is an integer. Moreover, 
$\lambda_{i-1}\ge \lambda_i+m\ge m$, so $|\lambda|\ge (i-1)m$, 
so 
$$
t\ge (i-1)m+m-i=i(m-1)\ge 0. 
$$
So $t\in \Bbb Z_+$, as desired. 
\end{proof}

\begin{proposition}\label{inje0}
There is a unique, up to scaling, nonzero 
${\rm Aut}(V,\Bbb 1)$-homomorphism 
$f_{\lambda,n}: S^{\widetilde{\lambda}(n)}V\to E_\lambda$, 
and this homomorphism is injective. 
\end{proposition}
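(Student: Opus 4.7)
The strategy is to establish uniqueness by Frobenius reciprocity combined with the classical $GL_n\downarrow GL_{n-1}$ branching rule, and then to deduce injectivity from the socle structure of $E_\lambda$ provided by Proposition~\ref{irre}(i).

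First, by construction $E_\lambda=SU\otimes\Bbb S^\lambda U$ is the algebraic induction of $\Bbb S^\lambda U$ from $GL(U)$ up to $\Aut(V,\Bbb 1)=GL(U)\ltimes U^*$, so Frobenius reciprocity yields
$$
\Hom_{\Aut(V,\Bbb 1)}\bigl(S^{\widetilde{\lambda}(n)}V,\,E_\lambda\bigr)
\;\cong\;
\Hom_{GL(U)}\bigl(S^{\widetilde{\lambda}(n)}V|_{GL(U)},\,\Bbb S^\lambda U\bigr).
$$
The classical branching rule decomposes $S^{\widetilde{\lambda}(n)}V|_{GL(U)}=\bigoplus_{\mu}\Bbb S^\mu U$, the sum taken over partitions $\mu$ interlacing $\widetilde{\lambda}(n)=(n-|\lambda|,\lambda_1,\lambda_2,\ldots)$. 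Only $\mu=\lambda$ contributes to the Hom space, and it does so with multiplicity one, since the interlacing condition for $\mu=\lambda$ reduces to $n-|\lambda|\geq\lambda_1$, the implicit hypothesis that $\widetilde{\lambda}(n)$ is a partition. Hence the Hom space is one-dimensional, which gives existence and uniqueness up to scalar.

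For injectivity, Proposition~\ref{irre}(i) identifies $E_\lambda$ with the contragredient parabolic Verma module $M(\widetilde{\lambda}(n))^\vee$. Since $\widetilde{\lambda}(n)$ is dominant integral, $M(\widetilde{\lambda}(n))$ has a unique (finite-dimensional) simple quotient $L(\widetilde{\lambda}(n))=S^{\widetilde{\lambda}(n)}V$, which by contragredient duality becomes the socle of $E_\lambda$. The socle inclusion thus provides an injective $\mathfrak{gl}(V)$-homomorphism $S^{\widetilde{\lambda}(n)}V\hookrightarrow E_\lambda$, which is automatically $\Aut(V,\Bbb 1)$-equivariant. Being a nonzero element of the one-dimensional Hom space, it spans it; hence every nonzero $\Aut(V,\Bbb 1)$-homomorphism is a scalar multiple of this socle inclusion and is therefore injective. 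The most delicate step is the identification of the socle of the contragredient parabolic Verma with $L(\widetilde{\lambda}(n))$; this is standard highest-weight category theory (the unique simple quotient of the parabolic Verma for dominant integral highest weight is swapped into the socle under contragredient duality), and I would cite it rather than reprove it.
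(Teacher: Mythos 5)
Your uniqueness argument is exactly the paper's: Frobenius reciprocity reduces the Hom space to $\Hom_{GL(U)}(\Bbb S^{\widetilde{\lambda}(n)}(U\oplus\Bbb C),\Bbb S^\lambda U)$, which is one-dimensional by the branching rule. For injectivity, however, you take a genuinely different route. The paper argues directly and more elementarily: if $f_{\lambda,n}$ had a nonzero kernel, then by applying elements of ${\mathfrak{gl}}(U)\ltimes U^*$ one could push a kernel vector down into the bottom layer $\Bbb S^\lambda U\subset \Bbb S^{\widetilde{\lambda}(n)}V$, where $f_{\lambda,n}$ is visibly injective — a contradiction. You instead invoke the module-theoretic structure of $E_\lambda$ from Proposition \ref{irre}(i): the socle of the contragredient parabolic Verma module is the dual of the head of the parabolic Verma, hence the simple module $L(\widetilde{\lambda}(n))=\Bbb S^{\widetilde{\lambda}(n)}V$, and the socle inclusion must span the one-dimensional Hom space. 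This is valid, and it buys a cleaner structural explanation (it identifies the image of $f_{\lambda,n}$ as the socle), at the cost of importing highest-weight category machinery where the paper needs only the observation that every nonzero ${\mathfrak{gl}}(U)\ltimes U^*$-submodule of $\Bbb S^{\widetilde{\lambda}(n)}V$ meets $\Bbb S^\lambda U$. One imprecision you should repair: Proposition \ref{irre}(i) identifies $E_\lambda$ with $M(t-|\lambda|,\lambda)^\vee$ for the fixed complex parameter $t$, not with $M(\widetilde{\lambda}(n))^\vee$; the ${\mathfrak{gl}}(V)$-structure on $E_\lambda$ depends on $t$ (via the action of $\mathrm{Id}_V$), whereas only the $\Aut(V,\Bbb 1)$-structure is relevant here and is independent of $t$. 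So you should say explicitly that, since $E_\lambda=SU\otimes\Bbb S^\lambda U$ as an $\Aut(V,\Bbb 1)$-module does not depend on $t$, you equip it with the ${\mathfrak{gl}}(V)$-structure at parameter $n$ and then apply Proposition \ref{irre}(i) with $t=n$; with that sentence added, the argument is complete.
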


\begin{proof}
By Frobenius reciprocity 
$$
\Hom_{{\rm Aut}(V,\Bbb 1)}
(\Bbb S^{\widetilde{\lambda}(n)}V,E_\lambda)=
\Hom_{GL(U)}(\Bbb S^{\widetilde{\lambda}(n)}(U\oplus \Bbb C),\Bbb S^\lambda U).
$$
According to the branching rules
for general linear groups, this space is $1$-dimensional. 
Thus, there is a unique, up to scaling, nonzero 
${\rm Aut}(V,\Bbb 1)$-homomorphism 
$f_{\lambda,n}: S^{\widetilde{\lambda}(n)}V\to E_\lambda$. 

Let us show that $f_{\lambda,n}$ are injective. 
Assume the contrary, and let $0\ne y\in \Bbb S^{\widetilde{\lambda}(n)}V$ 
be a vector such that $f_{\lambda,n}(y)=0$. 
It is easy to show that by 
applying elements of ${\mathfrak {gl}}
(U)\ltimes U^*$ to $y$, we can map $y$ 
to a nonzero vector $y'\in \Bbb S^\lambda U\subset
\Bbb S^{\widetilde{\lambda}(n)}V$ such that $f_{\lambda,n}(y')=0$. 
But this is a contradiction, since $f_{\lambda,n}|_{\Bbb S^\lambda U}$ is
clearly injective. 
\end{proof}

We will normalize $f_{\lambda,n}$ so that it corresponds to the canonical 
element \linebreak in $\Hom_{GL(U)}(\Bbb S^{\widetilde{\lambda}(n)}(U\oplus \Bbb C),
\Bbb S^\lambda U)$.

Now consider the natural homomorphism
$\phi_{\lambda,n}:\Bbb S^{\widetilde{\lambda}(n)}V\hookrightarrow \Bbb
S^{\widetilde{\lambda}(n+1)}V$, defined by setting the 
second argument of the natural projection 
$$
\Bbb S^{\widetilde{\lambda}(n)}V\otimes V\to \Bbb
S^{\widetilde{\lambda}(n+1)}V
$$
to be $\Bbb 1$. It is easy to show that this homomorphism is nonzero.

\begin{proposition}\label{inje}
The homomorphism $\phi_{\lambda,n}$ is injective. 
\end{proposition}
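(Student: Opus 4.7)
The plan is to reduce injectivity of $\phi_{\lambda,n}$ to injectivity of $f_{\lambda,n}$ from Proposition \ref{inje0}. First I would observe that $\phi_{\lambda,n}$ is manifestly ${\rm Aut}(V,\Bbb 1)$-equivariant, because ${\rm Aut}(V,\Bbb 1)$ fixes $\Bbb 1$ and the Pieri projection $\Bbb S^{\widetilde{\lambda}(n)}V\otimes V\to \Bbb S^{\widetilde{\lambda}(n+1)}V$ is $GL(V)$-equivariant. Hence the composition $f_{\lambda,n+1}\circ \phi_{\lambda,n}: \Bbb S^{\widetilde{\lambda}(n)}V\to E_\lambda$ is an ${\rm Aut}(V,\Bbb 1)$-homomorphism, and the uniqueness part of Proposition \ref{inje0} forces $f_{\lambda,n+1}\circ \phi_{\lambda,n}=c_n\,f_{\lambda,n}$ for some scalar $c_n\in \Bbb C$.

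The key step will be to show $c_n\ne 0$, which I plan to do by testing on a highest weight vector. Choose a highest weight vector $y\in \Bbb S^{\widetilde{\lambda}(n)}V$ for ${\mathfrak{gl}}(V)$ with respect to the Borel in which $\Bbb 1$ spans the top line of $V$. Then $y$ has weight $\widetilde{\lambda}(n)$, so $y\otimes \Bbb 1\in \Bbb S^{\widetilde{\lambda}(n)}V\otimes V$ has weight $\widetilde{\lambda}(n)+e_1=\widetilde{\lambda}(n+1)$. In the Pieri decomposition $\Bbb S^{\widetilde{\lambda}(n)}V\otimes V=\bigoplus_\nu \Bbb S^\nu V$, with $\nu$ ranging over partitions obtained from $\widetilde{\lambda}(n)$ by adding one box, the weight $\widetilde{\lambda}(n+1)$ is strictly dominance-larger than any other such $\nu$ and so occurs only in the $\Bbb S^{\widetilde{\lambda}(n+1)}V$ summand. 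Therefore $\phi_{\lambda,n}(y)$, being the projection of $y\otimes \Bbb 1$ onto this summand, is a nonzero highest weight vector of $\Bbb S^{\widetilde{\lambda}(n+1)}V$. Applying the injective map $f_{\lambda,n+1}$ gives $f_{\lambda,n+1}(\phi_{\lambda,n}(y))\ne 0$, while $f_{\lambda,n}(y)\ne 0$ by injectivity of $f_{\lambda,n}$; the identity $c_n f_{\lambda,n}(y)=f_{\lambda,n+1}(\phi_{\lambda,n}(y))$ then forces $c_n\ne 0$.

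Injectivity of $\phi_{\lambda,n}$ then follows at once: any $z\in \ker(\phi_{\lambda,n})$ satisfies $c_n f_{\lambda,n}(z)=f_{\lambda,n+1}(\phi_{\lambda,n}(z))=0$, and since $c_n\ne 0$ and $f_{\lambda,n}$ is injective, $z=0$. The only substantive obstacle in the plan is the non-vanishing of $c_n$, which the highest weight vector computation handles cleanly once one notices that setting the second tensor factor to $\Bbb 1$ is a weight-raising operation landing exactly in the $\widetilde{\lambda}(n+1)$ component of the Pieri decomposition.
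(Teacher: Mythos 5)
Your proof is correct and takes essentially the same route as the paper's: by the uniqueness statement of Proposition \ref{inje0}, the composite $f_{\lambda,n+1}\circ \phi_{\lambda,n}$ is proportional to $f_{\lambda,n}$, and injectivity of $\phi_{\lambda,n}$ follows from injectivity of $f_{\lambda,n}$. Your highest-weight-vector computation simply makes explicit the nonvanishing of the proportionality constant (equivalently, the nonvanishing of $\phi_{\lambda,n}$), which the paper absorbs into its chosen normalization of the maps $f_{\lambda,n}$.
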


\begin{proof}
By Proposition \ref{inje0}, the homomorphisms 
$f_{\lambda,n}$ are compatible with the homomorphisms 
$\phi_{\lambda,n}: S^{\widetilde{\lambda}(n)}V\hookrightarrow
S^{\widetilde{\lambda}(n+1)}V$,
i.e. $f_{n+1,\lambda}\circ \phi_{\lambda,n}=f_{\lambda,n}$. 
This implies that $\phi_{\lambda,n}$ 
are injective, as desired. 
\end{proof} 

Now let $\Bbb S^{\lambda,\infty}V$ be the direct limit 
of $\Bbb S^{\widetilde{\lambda}(n)}V$ 
with respect to the inclusions $\phi_{\lambda,n}$:
$\Bbb S^{\lambda,\infty}V=\lim_{n\to \infty}\Bbb S^{\widetilde{\lambda}(n)}V$. 

\begin{proposition}\label{limts} 
Let  $V=\Bbb C\Bbb 1\oplus U$ be a splitting.
Then there is a canonical isomorphism of 
the representation $\Bbb S^{\lambda,\infty}V$ 
with the induced representation 
$E_\lambda$.
\end{proposition}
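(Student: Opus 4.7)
The plan is to assemble the injective maps $f_{\lambda,n}$ of Proposition \ref{inje0} into a direct-limit homomorphism and then compare both sides as $GL(U)$-modules to promote injectivity to an isomorphism. The compatibility $f_{\lambda,n+1}\circ \phi_{\lambda,n}=f_{\lambda,n}$ recorded in the proof of Proposition \ref{inje} yields a canonical $\Aut(V,\Bbb 1)$-equivariant homomorphism
$$
f_\lambda:\Bbb S^{\lambda,\infty}V\longrightarrow E_\lambda,
$$
and since a filtered colimit of injective maps is injective, $f_\lambda$ is injective. It remains to prove surjectivity.

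For the right-hand side, decompose $E_\lambda=SU\otimes \Bbb S^\lambda U=\bigoplus_{k\ge 0}S^kU\otimes \Bbb S^\lambda U$ as a $GL(U)$-module. Pieri's rule identifies $S^kU\otimes \Bbb S^\lambda U$ with $\bigoplus_\mu \Bbb S^\mu U$, where $\mu$ ranges over partitions with $\mu/\lambda$ a horizontal $k$-strip, i.e.\ satisfying $\mu_i\ge \lambda_i\ge \mu_{i+1}$ for all $i\ge 1$ and $|\mu|=|\lambda|+k$. Summing over $k$ gives
$$
E_\lambda=\bigoplus_{\mu:\,\mu_i\ge \lambda_i\ge \mu_{i+1}\ \forall i}\Bbb S^\mu U,
$$
with no upper bound on $\mu_1$.

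For the left-hand side, I would apply the $GL$-branching rule for $U\oplus \Bbb C\supset U$ to $\Bbb S^{\widetilde{\lambda}(n)}(U\oplus \Bbb C)$: it decomposes as $\bigoplus_\mu \Bbb S^\mu U$ with $\mu$ interlacing $\widetilde{\lambda}(n)=(n-|\lambda|,\lambda_1,\lambda_2,\ldots)$, i.e.\ $n-|\lambda|\ge \mu_1\ge \lambda_1\ge \mu_2\ge \lambda_2\ge\cdots$, and each such $\Bbb S^\mu U$ occurs with multiplicity one. Since $\phi_{\lambda,n}$ is $GL(U)$-equivariant and injective and every isotypic component in the source is irreducible, its restriction to each $\Bbb S^\mu U$-summand must be an isomorphism onto the corresponding summand of $\Bbb S^{\widetilde{\lambda}(n+1)}V$. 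As $n$ grows, the only constraint that relaxes is the upper bound $\mu_1\le n-|\lambda|$, so passing to the colimit yields exactly the indexing set above. Hence $\Bbb S^{\lambda,\infty}V$ has the same $GL(U)$-character as $E_\lambda$.

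Since an injective $GL(U)$-equivariant map between semisimple modules of equal character is necessarily an isomorphism, $f_\lambda$ is bijective, and it is $\Aut(V,\Bbb 1)$-equivariant by construction. The main obstacle is verifying that the transition maps $\phi_{\lambda,n}$ realize the canonical branching inclusions so that no $GL(U)$-isotypic component is ``lost'' in the colimit; this is forced by multiplicity-one in the branching together with the injectivity of $\phi_{\lambda,n}$, but is the one point that requires care to avoid a hidden scaling artifact.
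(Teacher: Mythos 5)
Your proposal is correct and follows essentially the same route as the paper: assemble the compatible injections $f_{\lambda,n}$ into an embedding $f_{\lambda,\infty}:\Bbb S^{\lambda,\infty}V\to E_\lambda$ and then observe that both sides are multiplicity-free $GL(U)$-modules whose summands $\Bbb S^\mu U$ are indexed by the same interlacing condition $\mu_i\ge\lambda_i\ge\mu_{i+1}$, so the embedding is an isomorphism. The ``hidden scaling'' worry you flag at the end is resolved exactly as you say, by multiplicity-one plus injectivity, and is implicit in the paper's argument as well.
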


This proposition shows that the representation $E_\lambda$ in fact does not depend 
on the splitting of $V$, and is functorially attached to $(V,\Bbb 1)$. 

\begin{proof}
By Propositions \ref{inje0},\ref{inje}, we have an embedding 
$f_{\lambda,\infty}=\lim_{n\to \infty}f_{\lambda,n}: \Bbb S^{\lambda,\infty}V\to
SU\otimes \Bbb S^\lambda U$. Comparing the restrictions 
of both modules to $GL(U)$ using branching rules, we see 
that this embedding must be an isomorphism. Indeed, both $GL(U)$-modules 
are multiplicity free, and are direct sums of the irreducible modules 
$\Bbb S^\mu U$, where $\mu$ runs over partitions with at most $\dim V-1$ parts 
such that $\mu_i\ge \lambda_i\ge \mu_{i+1}$ for $i\ge 1$.  
\end{proof}

\subsection{Complex powers of a unital vector space}

Let $x$ be a variable. If $t$ is an arbitrary complex number,
then the function $x^t$ does not have an algebraic meaning. 
On the other hand, the function $(1+x)^t$ does: it is just the formal power series
$$
(1+x)^t=\sum_{m=0}^\infty 
\frac{t(t-1)...(t-n+1)}{n!}x^n.
$$
Similarly, if $V$ is a vector space, then we cannot naturally 
define $V^{\otimes t}$ for arbitrary complex $t$,  
but we can do so if $V$ is a {\it unital} vector space. 

\begin{definition} Let $(V,\Bbb 1)$ be a unital vector space. 
For $t\notin \Bbb Z_+$ define $V^{\otimes t}$ to be the ind-object of the
category $\Rep(S_t)$ given by the formula 
$$
V^{\otimes t}=\oplus_\lambda \Bbb S^{\lambda,\infty}V\otimes
X_\lambda.
$$
\end{definition}

This is clearly an interpolation of 
$V^{\otimes n}=\oplus_\lambda \Bbb S^{\widetilde{\lambda}(n)}V\otimes
\pi_{\widetilde{\lambda}(n)}$ to complex rank. 

Thus we have a complex rank analog of the Schur-Weyl functor, 
$SW_t: \Rep(S_t)^{\rm op}\to {\rm Ind}{\bold{Rep}}({\rm Aut}(V,\Bbb 1))$, 
which is given by the formula 
$$
SW_t(\pi)={\rm Hom}(\pi,V^{\otimes t}), 
$$
and $SW_t(X_\lambda)=E_\lambda$ for all partitions $\lambda$
(note that this is zero if $\lambda$ has more than $\dim V-1$ parts). 

\begin{proposition}\label{hilse}
Assume that $(V,\Bbb 1)$ is a nonnegatively graded (or filtered) 
unital vector space with $V[0]$ spanned by $\Bbb 1$. Let the 
Hilbert series of $V$ be $h_V(x)=1+O(x)$. 
Then the Hilbert series of $V^{\otimes t}$ 
is $h(x)^t$. 
\end{proposition}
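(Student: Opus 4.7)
The plan is to unwind both sides using the defining decomposition of $V^{\otimes t}$ and then reduce to the classical integer case by polynomial interpolation in $t$. By the definition of $V^{\otimes t}$ and Proposition \ref{limts}, we have
$$V^{\otimes t}=\oplus_\lambda E_\lambda\otimes X_\lambda$$
as a graded ind-object, where $E_\lambda=SU\otimes \Bbb S^\lambda U$ inherits its grading from the canonical splitting $V=\Bbb C\Bbb 1\oplus U$ with $U=\oplus_{d\ge 1}V[d]$. Interpreting the Hilbert series of $V^{\otimes t}$ as the series whose $x^d$-coefficient is the categorical dimension of the degree-$d$ piece in ${\rm Ind}\Rep(S_t)$, this decomposition yields
$$h_{V^{\otimes t}}(x)=\sum_\lambda h_{E_\lambda}(x)\cdot \dim X_\lambda,$$
and the goal is to show this equals $h_V(x)^t$.

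First I would check, in each fixed $x$-degree $d$, that both sides are polynomials in $t$. Writing $h_V(x)=1+u(x)$ with $u(x)\in x\Bbb C[[x]]$, the expansion $h_V(x)^t=\sum_k\binom{t}{k}u(x)^k$ truncates at $k\le d$ after extracting $[x^d]$, giving a polynomial in $t$ of degree $\le d$. On the other side, since $U$ sits in strictly positive degrees, $E_\lambda=SU\otimes \Bbb S^\lambda U$ is concentrated in degrees $\ge |\lambda|$, so only the finitely many $\lambda$ with $|\lambda|\le d$ contribute to $[x^d]$ of the right-hand side; each such contribution is the nonnegative integer $[x^d]h_{E_\lambda}(x)$ times the integer-valued polynomial $\dim X_\lambda$ (cf.\ Proposition \ref{dimfor}). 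Hence both sides are polynomials in $t$, and it suffices to prove the identity at infinitely many integer values of $t$.

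For this I would invoke classical Schur-Weyl for $t=n\in \Bbb Z_+$: $h_V(x)^n=\sum_{|\mu|=n}h_{\Bbb S^\mu V}(x)\dim\pi_\mu$. Using the semistandard-tableau basis of $\Bbb S^\mu V$ together with the fact that $\Bbb 1$ is the minimal basis element (so can occupy boxes only in the first row), one sees that $[x^d]h_{\Bbb S^\mu V}(x)=0$ unless $\mu_1\ge|\mu|-d$, i.e.\ $\mu=\widetilde{\lambda}(n)$ for some $\lambda$ with $|\lambda|\le d$. The key \emph{graded stabilization} step is that for such $\lambda$ and $n\ge d$, the inclusion $\phi_{\lambda,n}:\Bbb S^{\widetilde{\lambda}(n)}V\hookrightarrow \Bbb S^{\widetilde{\lambda}(n+1)}V$ from Proposition \ref{inje} is an isomorphism in degree $d$ (every degree-$d$ SSYT of shape $\widetilde{\lambda}(n+1)$ begins with at least one $\Bbb 1$ in the first row, which can be deleted to produce a degree-$d$ SSYT of shape $\widetilde{\lambda}(n)$); hence $[x^d]h_{\Bbb S^{\widetilde{\lambda}(n)}V}(x)=[x^d]h_{E_\lambda}(x)$ for $n\ge d$. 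Combined with the fact that $\dim\pi_{\widetilde{\lambda}(n)}$ agrees with the polynomial $\dim X_\lambda$ at $t=n$ for $n\ge|\lambda|+\lambda_1$ (the hooklength interpolation underlying Proposition \ref{dimfor}), the two polynomials in $t$ coincide for all sufficiently large integers, hence for all $t\in\Bbb C$. Summing over $d$ gives $h_{V^{\otimes t}}(x)=h_V(x)^t$. I expect the graded stabilization step to be the main obstacle; the rest is routine bookkeeping combining polynomial interpolation with classical Schur-Weyl.
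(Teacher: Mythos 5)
Your proof is correct and follows exactly the route the paper takes: the paper's entire proof of Proposition \ref{hilse} is the one-line remark that the result follows ``by interpolation from integer $n$'', and your argument is a careful implementation of that interpolation (polynomiality in $t$ of each fixed $x$-degree coefficient on both sides, reduction to classical Schur--Weyl at $t=n$, and graded stabilization of the inclusions $\phi_{\lambda,n}$ so that $[x^d]h_{\Bbb S^{\widetilde{\lambda}(n)}V}=[x^d]h_{E_\lambda}$ for large $n$). No gaps.
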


\begin{proof}
The proof is by interpolation from integer $n$. 
\end{proof}

Let ${\rm gr}V:=\Bbb C\Bbb 1\oplus \bar V$
be the associated graded of the space $V$ with respect to its natural 
2-step filtration. The object $V^{\otimes t}$ has a natural ascending 
$\Bbb Z_+$-filtration such that ${\rm gr} (V^{\otimes t})=({\rm gr}V)^{\otimes t}$. 
Namely, for each Young diagram $\mu$ we have an ascending filtration on $\Bbb S^\mu V$, 
which is induced by the filtration on $V$; this filtration is compatible with the inclusions 
$\phi_{n,\lambda}: \Bbb S^{\widetilde{\lambda}(n)}V\hookrightarrow \Bbb S ^{\widetilde{\lambda}(n+1)}V$ and thus defines 
a filtration on $\Bbb S^{\lambda,\infty}V$. This gives rise to a filtration on 
$V^{\otimes t}$ by taking direct sum. 

\begin{proposition}\label{grad}
One has 
$$
{\rm gr}V^{\otimes t}\cong S\bar V\otimes (\oplus_\lambda \Bbb S^\lambda \bar V\otimes
X_\lambda).
$$
\end{proposition}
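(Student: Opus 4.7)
The plan is to reduce the statement to Proposition \ref{limts} applied to the graded unital space $({\rm gr}V,\Bbb 1)$ by interchanging $\rm gr$ with the direct-limit construction defining $V^{\otimes t}$.

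First I would check that the structure maps $\phi_{\lambda,n}:\Bbb S^{\widetilde\lambda(n)}V\hookrightarrow \Bbb S^{\widetilde\lambda(n+1)}V$ respect the two-step filtration on $V$. Since $\phi_{\lambda,n}$ is obtained by inserting the vector $\Bbb 1\in F_0V$ into the last tensor slot and then symmetrizing, and since Schur functors (being idempotent summands of tensor powers) are functorial on filtered vector spaces, $\phi_{\lambda,n}$ maps the $k$th filtered piece of $\Bbb S^{\widetilde\lambda(n)}V$ into the $k$th filtered piece of $\Bbb S^{\widetilde\lambda(n+1)}V$. In particular, the induced filtration on the direct limit $\Bbb S^{\lambda,\infty}V$ is exhaustive, and one obtains the basic identity
$$
{\rm gr}\,\Bbb S^{\lambda,\infty}V \;=\; \lim_{n\to\infty} {\rm gr}\,\Bbb S^{\widetilde\lambda(n)}V \;=\; \lim_{n\to\infty} \Bbb S^{\widetilde\lambda(n)}({\rm gr}V),
$$
using the standard fact that for a filtered vector space $W$ one has ${\rm gr}\,\Bbb S^\mu W = \Bbb S^\mu({\rm gr}W)$, together with the fact that $\rm gr$ commutes with filtered colimits along filtration-preserving maps.

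Next I would apply Proposition \ref{limts} to the unital vector space $({\rm gr}V,\Bbb 1)$. Here the splitting is canonical: ${\rm gr}V=\Bbb C\Bbb 1\oplus \bar V$, so one may take $U=\bar V$. Proposition \ref{limts} then identifies the right-hand side of the displayed equation with
$$
E_\lambda({\rm gr}V)\;=\;S\bar V\otimes \Bbb S^\lambda\bar V.
$$
Consequently ${\rm gr}\,\Bbb S^{\lambda,\infty}V\cong S\bar V\otimes \Bbb S^\lambda\bar V$ as ${\rm Aut}({\rm gr}V,\Bbb 1)$-modules.

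Finally, since $V^{\otimes t}=\oplus_\lambda \Bbb S^{\lambda,\infty}V\otimes X_\lambda$ and the filtration on $V^{\otimes t}$ is the direct sum of the filtrations on the $\Bbb S^{\lambda,\infty}V$ (tensored with $X_\lambda$, which sits in filtration degree $0$), taking $\rm gr$ commutes with this direct sum and yields
$$
{\rm gr}\,V^{\otimes t}\;=\;\bigoplus_\lambda \bigl(S\bar V\otimes \Bbb S^\lambda\bar V\bigr)\otimes X_\lambda \;=\; S\bar V\otimes \Bigl(\bigoplus_\lambda \Bbb S^\lambda\bar V\otimes X_\lambda\Bigr),
$$
as desired. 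The only nontrivial verification is the compatibility of $\phi_{\lambda,n}$ with the filtration and the resulting commutation of $\rm gr$ with the direct limit; everything else is formal once Proposition \ref{limts} is in hand.
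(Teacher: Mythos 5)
Your proof is correct and follows the same route as the paper, which simply states that the result follows immediately from Proposition \ref{limts}; you have filled in the details (compatibility of $\phi_{\lambda,n}$ with the filtration, commutation of $\mathrm{gr}$ with Schur functors and the direct limit) that the paper leaves implicit.
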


\begin{proof}
This follows immediately from Proposition \ref{limts}.
\end{proof}

\begin{proposition}\label{degree1}
One has 
$$
F_0V^{\otimes t}=\bold 1,\ 
F_1V^{\otimes t}=(V\otimes \h)/\h_0=V\otimes \bold 1\oplus \bar
V\otimes \h_0.
$$ 
\end{proposition}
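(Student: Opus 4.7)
The plan is to compute $F_0$ and $F_1$ directly from Proposition \ref{grad}, and then invoke semisimplicity of $\Rep(S_t)$ to split the resulting short exact sequence. By Proposition \ref{grad}, ${\rm gr}\, V^{\otimes t} \cong S\bar V \otimes \bigl(\oplus_\lambda \Bbb S^\lambda \bar V \otimes X_\lambda\bigr)$; placing $\bar V$ in degree $1$, the summand $S^a \bar V \otimes \Bbb S^\lambda \bar V \otimes X_\lambda$ sits in degree $a + |\lambda|$. In degree zero only $(a,\lambda) = (0,\emptyset)$ contributes, so ${\rm gr}_0\, V^{\otimes t} = \bold 1$; since $F_{-1} = 0$, this immediately yields $F_0 V^{\otimes t} = \bold 1$.

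In degree one the contributing terms are $(a, \lambda) = (1, \emptyset)$, giving $\bar V \otimes \bold 1$, and $(a, \lambda) = (0, (1))$, giving $\bar V \otimes \h_0$ (since $X_{(1)} = \h_0$), so ${\rm gr}_1\, V^{\otimes t} = \bar V \otimes \bold 1 \oplus \bar V \otimes \h_0$. Hence we obtain a short exact sequence
$$0 \to \bold 1 \to F_1 V^{\otimes t} \to \bar V \otimes \bold 1 \oplus \bar V \otimes \h_0 \to 0$$
in ${\rm IndRep}(S_t)$. Since $\Rep(S_t)$ is semisimple for $t \notin \Bbb Z_+$ and all three terms are direct sums of the distinct simples $\bold 1$ and $\h_0$, the sequence splits, giving $F_1 V^{\otimes t} = \bold 1 \oplus \bar V \otimes \bold 1 \oplus \bar V \otimes \h_0 = V \otimes \bold 1 \oplus \bar V \otimes \h_0$. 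The identification with $(V \otimes \h)/\h_0$ then follows from $\h = \bold 1 \oplus \h_0$ and $V = \Bbb C \Bbb 1 \oplus \bar V$: expanding $V \otimes \h = V \otimes \bold 1 \oplus \h_0 \oplus \bar V \otimes \h_0$, the canonical copy of $\h_0$ inside $V \otimes \h$ (induced by the unit $\Bbb 1\colon \bold 1 \to V$) is the middle summand, and quotienting recovers $V \otimes \bold 1 \oplus \bar V \otimes \h_0$.

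The main subtlety is justifying the splitting when $\bar V$ may be infinite-dimensional, so that $F_1 V^{\otimes t}$ is a genuine ind-object rather than an object of $\Rep(S_t)$. In the ind-completion of a semisimple category every ind-object canonically decomposes into isotypic components for the simples, which delivers the splitting; alternatively, one may verify it at integer $t = n$, where $F_1 V^{\otimes n}$ visibly decomposes as $\Bbb C \Bbb 1^{\otimes n}$ plus the $S_n$-span of the first-order tensors $\Bbb 1^{\otimes i-1} \otimes v \otimes \Bbb 1^{\otimes n-i}$ ($v \in \bar V$), and then interpolate to complex $t$.
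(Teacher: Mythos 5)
Your argument is correct and follows essentially the same route as the paper: the paper likewise reads off from the associated graded object that only $\lambda=\emptyset$ and $\lambda=(1)$ contribute to $F_1$, and then identifies their contributions as $V\otimes\bold 1$ and $\bar V\otimes\h_0$. The only cosmetic difference is that you split the extension $0\to F_0\to F_1\to {\rm gr}_1\to 0$ by semisimplicity, whereas the paper avoids this by directly computing $F_1$ of each summand $\Bbb S^{\lambda,\infty}V\otimes X_\lambda$ (e.g.\ $F_1\Bbb S^{\emptyset,\infty}V\cong V$ canonically).
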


\begin{proof}
The first statement is obvious. 
To prove the second statement, 
note that $\lambda=\emptyset$ and $\lambda=(1)$
are the only partitions contributing to $F_1$
(this is easily seen by looking at the associated graded object).
Now, the contribution of $\lambda=\emptyset$ is $V$, and 
the contribution of $\h_0=X_{(1)}$ is $\bar V\otimes \h_0$, as desired.   
\end{proof} 

\begin{proposition}\label{monfu} The assignment $V\mapsto V^{\otimes t}$
is a (non-additive) symmetric monoidal functor from the category of unital vector spaces 
to ${\rm Ind}\Rep(S_t)$. 
\end{proposition}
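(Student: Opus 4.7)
The plan is to construct the three pieces of data defining a symmetric monoidal functor --- the action on morphisms, the monoidal constraint, and the unit constraint --- by interpolating from the classical case $t=n$, and then to verify each of the coherence axioms by the same interpolation principle.

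First, for functoriality, a morphism of unital vector spaces is by definition a linear map $f\colon V\to W$ with $f(\mathbb 1_V)=\mathbb 1_W$. Any such $f$ induces maps $\mathbb S^\mu f\colon \mathbb S^\mu V\to \mathbb S^\mu W$ for every partition $\mu$, and the hypothesis $f(\mathbb 1_V)=\mathbb 1_W$ is exactly what is needed for these to commute with the stabilization maps $\phi_{\lambda,n}$ of Proposition \ref{inje}. They therefore assemble into $\mathbb S^{\lambda,\infty}f\colon \mathbb S^{\lambda,\infty}V\to \mathbb S^{\lambda,\infty}W$, whence a morphism $f^{\otimes t}\colon V^{\otimes t}\to W^{\otimes t}$ in $\mathrm{Ind}\,\Rep(S_t)$.

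Next I would construct the monoidal constraint. The category of unital vector spaces carries the tensor product $(V,\mathbb 1_V)\otimes(W,\mathbb 1_W):=(V\otimes W,\mathbb 1_V\otimes\mathbb 1_W)$. For integer $n$, the classical $S_n$-equivariant rearrangement isomorphism $\beta_n\colon (V\otimes W)^{\otimes n}\to V^{\otimes n}\otimes W^{\otimes n}$ (diagonal action on the right) is compatible with appending the distinguished vector in the last slot --- that is, with the maps $\phi_{\lambda,n}$ on all three sides --- precisely because $\mathbb 1_V\otimes\mathbb 1_W$ is the distinguished vector of $V\otimes W$. Decomposing $\beta_n$ isotypically under $S_n$ and applying Schur--Weyl duality yields, for each $\nu$, an isomorphism
\[
\mathbb S^{\widetilde\nu(n)}(V\otimes W)\cong\bigoplus_{\lambda,\mu}\mathbb S^{\widetilde\lambda(n)}V\otimes\mathbb S^{\widetilde\mu(n)}W\otimes\Hom_{S_n}(\pi_{\widetilde\nu(n)},\pi_{\widetilde\lambda(n)}\otimes\pi_{\widetilde\mu(n)}),
\]
and the multiplicity spaces on the right stabilize (in $n$) to $\Hom_{\Rep(S_t)}(X_\nu,X_\lambda\otimes X_\mu)$ --- this is the stabilization of Kronecker coefficients that underlies the tensor product in $\Rep(S_t)$. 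Passing to direct limits in $n$ gives the desired natural isomorphism $\alpha_{V,W}\colon (V\otimes W)^{\otimes t}\to V^{\otimes t}\otimes W^{\otimes t}$ in $\mathrm{Ind}\,\Rep(S_t)$.

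The unit constraint is the tautology $\mathbb C^{\otimes t}=\bold 1$, obvious from the definition. For the coherence axioms (associativity, symmetry, and the unit triangles), each diagram is an identity of natural transformations whose entries are polynomial in $t$ through the multiplicities $\dim\Hom_{\Rep(S_t)}(X_\nu,X_\lambda\otimes X_\mu)$ and the Schur functors; each such identity holds at every nonnegative integer $t=n$ because it holds for the classical rearrangement maps $\beta_n$, and therefore holds for all $t\notin\Bbb Z_+$ by interpolation. Naturality of $\alpha_{V,W}$ in $V$ and $W$ follows from naturality of each $\beta_n$.

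The main obstacle is the first step of stabilization --- proving that $\beta_n$ is compatible with the ``append $\mathbb 1$'' maps $\phi_{\lambda,n}$ jointly on $V^{\otimes n}$, $W^{\otimes n}$, and $(V\otimes W)^{\otimes n}$ --- so that the direct limit produces a genuine morphism of ind-objects rather than a cohort of unrelated morphisms at each $n$. Once this compatibility is established, the remainder is combinatorial bookkeeping using the branching rules for Schur functors and Proposition \ref{limts}.
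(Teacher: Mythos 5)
Your proposal is correct and follows essentially the same route as the paper: the paper's proof simply takes the classical rearrangement isomorphisms $J_n$ between $V^{\otimes n}\otimes W^{\otimes n}$ and $(V\otimes W)^{\otimes n}$, observes that they are polynomial in $n$ in the appropriate sense, and interpolates them (together with the coherence identities) to complex $t$. Your version fills in what this interpolation means concretely --- compatibility with the maps $\phi_{\lambda,n}$, the isotypic decomposition, and the stabilization of the Kronecker multiplicity spaces --- but the underlying argument is the same.
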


\begin{proof} Let $V,W$ be two unital vector spaces. 
For every nonnegative integer $n$ we have morphisms 
$$
J_n: V^{\otimes n}\otimes W^{\otimes n}\to (V\otimes W)^{\otimes
  n}.
$$
and $J_n'=J_n^{-1}$. 
These morphisms are polynomial in $n$ in an appropriate sense, 
hence they interpolate to morphisms 
$J: V^{\otimes t}\otimes W^{\otimes t}\to (V\otimes W)^{\otimes t}$
and $J': (V\otimes W)^{\otimes t}\to V^{\otimes t}\otimes
W^{\otimes t}$, 
such that $J\circ J'=1$, $J'\circ J=1$. 
Also, $J$ is a symmetric monoidal structure, since 
it is one for integer $t$.  
\end{proof} 

Let ${\rm Res}^{S_t}_{S_{t-1}}: \Rep(S_t)\to \Rep(S_{t-1})$ be the restriction functor. 

\begin{proposition}\label{onefac}
We have a natural isomorphism $\psi_t: V^{\otimes t-1}\otimes V\to {\rm Res}^{S_t}_{S_{t-1}}V^{\otimes t}$,
which commutes with the action of ${\mathfrak{gl}}(V)$. 
\end{proposition}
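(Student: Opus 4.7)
The plan is to construct $\psi_t$ by decomposing both sides into $X_\mu$-isotypic components in $\mathrm{Ind}\,\Rep(S_{t-1})$ and matching them $\mathfrak{gl}(V)$-equivariantly. The underlying idea is that for integer $t=n$ the identification $V^{\otimes n}=V^{\otimes(n-1)}\otimes V$ is tautological (factor off the last tensor slot) and is manifestly equivariant under $S_{n-1}\times \mathfrak{gl}(V)$; what needs to be verified is that this identification, reorganized via Schur-Weyl, interpolates cleanly in $t$.

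First I would compute the right hand side. Using the decomposition $V^{\otimes t}=\bigoplus_\lambda \mathbb{S}^{\lambda,\infty}V\otimes X_\lambda$ together with the branching rule for $\mathrm{Res}^{S_t}_{S_{t-1}}$ --- which one obtains by interpolating the classical branching $\pi_{\widetilde\lambda(n)}|^{S_n}_{S_{n-1}}$ and reads
$$
\mathrm{Res}^{S_t}_{S_{t-1}} X_\lambda = X_\lambda \oplus \bigoplus_{\mu\in P_\lambda^-} X_\mu
$$
--- and then reindexing the double sum over $(\lambda,\mu)$ by $\mu$, the $X_\mu$-isotypic component of $\mathrm{Res}^{S_t}_{S_{t-1}} V^{\otimes t}$ is
$$
\mathbb{S}^{\mu,\infty}V \,\oplus\, \bigoplus_{\lambda\in P_\mu^+}\mathbb{S}^{\lambda,\infty}V,
$$
with its natural $\mathfrak{gl}(V)$-action.

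Next I would compute the left hand side: from $V^{\otimes(t-1)} = \bigoplus_\mu \mathbb{S}^{\mu,\infty}V\otimes X_\mu$ the $X_\mu$-component of $V^{\otimes(t-1)}\otimes V$ is $\mathbb{S}^{\mu,\infty}V\otimes V$. At each finite stage the classical Pieri rule gives the $GL(V)$-equivariant direct sum
$$
\mathbb{S}^{\widetilde\mu(n-1)}V \otimes V \;=\; \mathbb{S}^{\widetilde\mu(n)}V \,\oplus\, \bigoplus_{\lambda\in P_\mu^+} \mathbb{S}^{\widetilde\lambda(n-1)}V,
$$
where the first summand corresponds to adding a box to the first row of $\widetilde\mu(n-1)$ and the remaining summands to adding a box in row $\ge 2$. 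After checking that these summands are compatible with the transition maps $\phi_{\mu,n}$ and $\phi_{\lambda,n-1}$ (which inject $\Bbb 1$ into a fresh tensor slot, while Pieri decomposes an independent slot), passing to the direct limit yields
$$
\mathbb{S}^{\mu,\infty}V \otimes V \;\cong\; \mathbb{S}^{\mu,\infty}V \,\oplus\, \bigoplus_{\lambda\in P_\mu^+} \mathbb{S}^{\lambda,\infty}V
$$
as $\mathfrak{gl}(V)$-modules. Assembling the resulting match across all $\mu$ defines $\psi_t$; its $\mathfrak{gl}(V)$-equivariance is automatic since every identification above is $\mathfrak{gl}(V)$-equivariant.

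The main obstacle is the compatibility of the stage-$n$ Pieri splitting with the direct system $\{\phi_{\lambda,n}\}$, so that the splitting survives in the limit; concretely one has to verify that the $GL(V)$-equivariant embedding $\phi_{\lambda,n}$ intertwines consecutive Pieri decompositions, which can be done at the level of highest weight vectors using the normalization of $f_{\lambda,n}$ from Proposition \ref{inje0}. A cleaner alternative is pure interpolation: for integer $t=n$ the tautological isomorphism $V^{\otimes n}=V^{\otimes(n-1)}\otimes V$ is $S_{n-1}\times \mathfrak{gl}(V)$-equivariant, and its matrix coefficients in the Schur-Weyl bases are polynomial in $n$, so the family of isomorphisms extends polynomially to all $t\notin \Bbb Z_+$, producing $\psi_t$ directly.
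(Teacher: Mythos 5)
Your proposal is correct and matches the paper's argument, which likewise constructs $\psi_t$ by interpolation from integer $t$ and verifies that it is an isomorphism via the decomposition of $V\otimes M(t-1-|\mu|,\mu)$ (i.e.\ of $\Bbb S^{\mu,\infty}V\otimes V$) into irreducible ${\mathfrak{gl}}(V)$-modules; your isotypic-component bookkeeping is just a more explicit version of that check. The only slip is cosmetic: in your finite-stage Pieri decomposition the summands indexed by $\lambda\in P_\mu^+$ are $\Bbb S^{\widetilde\lambda(n)}V$ rather than $\Bbb S^{\widetilde\lambda(n-1)}V$ (since $n-1-|\mu|=n-|\lambda|$), which does not affect the direct limit.
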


\begin{proof}
The morphism $\psi_t$ is constructed by interpolation from integer $t$, and it is easy to see that 
it is an isomorphism (this follows from the decomposition of $V\otimes M_{(t-1-|\lambda|,\lambda)}$ 
into irreducible ${\mathfrak{gl}}(V)$-modules). 
\end{proof}

\section{Wreath products} 

In this section we use the notion of a complex power of a unital vector space 
to construct wreath products of complex rank (for any associative algebra $A$). 
In the case when $A$ is a group algebra, this was done in a different way by Knop, \cite{Kn1,Kn2}.  

\subsection{Complex powers of a unital algebra}

Now assume that $V=A$ is a unital associative algebra with unit
$\Bbb 1$. Then, by Proposition \ref{monfu}, $A^{\otimes t}$ is also 
an algebra (in ${\rm Ind}\Rep(S_t)$). This algebra interpolates the algebra 
$A^{\otimes n}$ in ${\rm Ind}{\bold{Rep}}(S_n)$, whose category of modules in ${\rm Ind}{\bold{Rep}}(S_n)$ is 
naturally equivalent to the category of modules over the wreath product 
$\Bbb C[S_n]\ltimes A^{\otimes n}$. Thus we can think of the category of
modules over $A^{\otimes t}$ in ${\rm Ind}\Rep(S_t)$ as an interpolation to 
complex rank of the category of modules over the wreath product
$\Bbb C S_n\ltimes A^{\otimes n}$; it will thus be denoted by 
${\widehat{\Rep}}(S_t\ltimes A^{\otimes t})$ (the hat is used 
to emphasize that the modules are allowed to be infinite dimensional, i.e. 
ind-objects of $\Rep(S_t)$). 

Similarly to the case $A=\Bbb C$ considered above, 
we can define the wreath product algebra $\Bbb C[S_t]\ltimes A^{\otimes t}$ in 
${\rm Ind}\Rep(S_t)$. Namely, the Hopf algebra $\Bbb C[S_t]$ acts 
naturally on $A^{\otimes t}$ (since $A^{\otimes t}$ 
is an algebra in ${\rm Ind}\Rep(S_t)$), so $\Bbb C[S_t]\ltimes A^{\otimes t}$
is defined just as the usual smash product from the theory of Hopf actions, 
formed inside ${\rm Ind}\Rep(S_t)$. Moreover, 
$A^{\otimes t}$-modules is the same thing as 
$\Bbb C[S_t]\ltimes A^{\otimes t}$-modules, in which the
subalgebra $\Bbb C[S_t]$ acts via its canonical action
(see Remark \ref{semidir}). 

Note also that if $A$ is a Hopf algebra, then so are $A^{\otimes t}$
and $\Bbb C[S_t]\ltimes A^{\otimes t}$. Hence, in this case, 
the category of $A^{\otimes t}$-modules is a tensor category
(see Remark \ref{semidir}). \footnote{Note that this is slight abuse of terminology, since 
in this category, only finite dimensional objects (i.e., those 
which are honest objects of $\Rep(S_t)$, rather than ind-objects) are rigid.} 

Let us study the structure of $A^{\otimes t}$ in more detail. 
Define the ordinary algebra $S^t A:=\Hom(\bold 1,A^{\otimes t})$
(in the category of vector spaces). 

\begin{proposition}\label{unenv}
For $t\notin \Bbb Z_+$ the algebra $S^t A:=\Hom(\bold 1,A^{\otimes t})$ is naturally
isomorphic to $U(A)/(\Bbb 1=t)$, where $U(A)$ is the universal
enveloping algebra of $A$ regarded as a Lie algebra. 
\end{proposition}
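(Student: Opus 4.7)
The plan is to identify both algebras with $S\bar A$ as filtered vector spaces and then produce a natural algebra map between them by polynomial interpolation from integer $n$.

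First I would unpack $S^tA$. Since $A^{\ot t}=\bigoplus_\lambda \Bbb S^{\lambda,\infty}A\ot X_\lambda$ and only $X_\emptyset=\bold 1$ carries a nonzero invariant, applying $\Hom(\bold 1,-)$ identifies $S^tA\cong \Bbb S^{\emptyset,\infty}A$, which by Proposition \ref{limts} equals $SU=S\bar A$ for any splitting $A=\Bbb C\Bbb 1\oplus U$. The ascending filtration on $A^{\ot t}$ induced by the two-step filtration of $A$ restricts to a filtration on $S^tA$; Proposition \ref{grad} combined with exactness of $\Hom(\bold 1,-)$ gives $\operatorname{gr}_j S^tA\cong S^j\bar A$. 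On the other side, the PBW filtration on $U(A)/(\Bbb 1=t)$ has associated graded $S(A/\Bbb C\Bbb 1)=S\bar A$, since $\Bbb 1\in A$ is central.

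Next I would build the map. By Proposition \ref{degree1}, $F_1A^{\ot t}=A\ot \bold 1\oplus \bar A\ot \h_0$, whose $\bold 1$-invariants give a canonical embedding $\alpha\colon A\hookrightarrow F_1S^tA$. For integer $n$, $\alpha$ is the usual symmetrization $\sigma(a)=\sum_{i=1}^n \Bbb 1^{\ot(i-1)}\ot a\ot \Bbb 1^{\ot(n-i)}$, which satisfies $\sigma(\Bbb 1)=n\cdot 1$ and $[\sigma(a),\sigma(b)]=\sigma(ab-ba)$ by the direct computation $\sigma(a)\sigma(b)=\sum_i \sigma_i(ab)+\sum_{i\neq j}\sigma_i(a)\sigma_j(b)$. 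Since both $\alpha$ and the multiplication on $A^{\ot t}$ (from Proposition \ref{monfu}) depend $t$-polynomially, these identities interpolate to $\alpha(\Bbb 1)=t\cdot 1$ and $[\alpha(a),\alpha(b)]=\alpha(ab-ba)$ for all $t\notin \Bbb Z_+$. The universal property of $U(A)$ then yields an algebra map $\widetilde\alpha\colon U(A)/(\Bbb 1=t)\to S^tA$ extending $\alpha$.

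Finally I would check bijectivity. The map $\widetilde\alpha$ is compatible with the two filtrations, and $\operatorname{gr}\widetilde\alpha$ is a graded algebra homomorphism $S\bar A\to \operatorname{gr}S^tA$. In degree one it is the identity $\bar A\to\bar A$ by construction of $\alpha$. The Hilbert series of $\operatorname{gr}S^tA$ in each degree $j$ equals $\dim S^j\bar A$ as computed above, and an interpolation from integer $n$ (where $(\operatorname{gr}A)^{\ot n}$-invariants are visibly generated in degree one since $\bar A\cdot \bar A=0$ in $\operatorname{gr}A$) shows that $\operatorname{gr}S^tA$ is generated as a graded algebra by its degree-one part. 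Consequently $\operatorname{gr}\widetilde\alpha$ is a graded algebra surjection between spaces of equal Hilbert series, hence an isomorphism, and $\widetilde\alpha$ itself is an isomorphism.

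The main obstacle will be making the interpolation steps in the second and third paragraphs fully rigorous: one must realize $\alpha$, the multiplication on $A^{\ot t}$, the commutator relation, and the generation statement for $\operatorname{gr}S^tA$ as morphisms in $\Hom$-spaces of the partition category that depend polynomially on $t$, and then appeal to the principle that a polynomial identity valid for all sufficiently large integer $n$ holds identically, and in particular at every $t\notin \Bbb Z_+$. Once this bookkeeping is in place, the remaining content reduces to the classical $S_n$-invariant calculation together with the filtration argument above.
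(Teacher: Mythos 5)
Your proposal is correct and takes essentially the same route as the paper: build the map $A\to F_1 S^tA$ from Proposition \ref{degree1}, check by interpolation from integer $n$ that it is a Lie algebra map sending $\Bbb 1$ to $t$, and conclude via the two-step filtration on $A$ that the induced map on associated graded objects $S\bar A\to S\bar A$ is an isomorphism. The only point worth tightening is the final ``surjection between spaces of equal Hilbert series'' step, which is inconclusive if $A$ is infinite-dimensional; it is cleaner (and is what the paper does) to note that $\operatorname{gr}\widetilde\alpha$ is a graded algebra endomorphism of $S\bar A$ fixing the degree-one generators, hence the identity.
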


\begin{proof}
We have a natural linear map $A\to S^t A\hookrightarrow A^{\otimes t}$ 
given as the composition $A\to A\otimes \h\to F_1A^{\otimes t}$, 
where the first map is $1\otimes \iota$, $\iota$ being the 
unit map of the algebra $\h$, and the second map is the map of 
Proposition \ref{degree1}. It is easy to check that this map 
is a Lie algebra homomorphism, such that $\Bbb 1\to t$, 
so it gives rise to a homomorphism 
of associative algebras $\zeta_t: U(A)/(\Bbb 1-t)\to S^t A$. 
This homomorphism interpolates the natural homomorphism 
$\zeta_n: U(A)/(\Bbb 1=n)\to S^nA$ for nonnegative integer $n$, given by 
$$
a\to a_1+...+a_n, a\in A,
$$
where $a_i=1^{\otimes i-1}\otimes a\otimes 1^{n-i}\in A^{\otimes n}$.

To check that $\zeta_t$ is an isomorphism for $t\notin \Bbb Z_+$, 
consider the filtration inn $A$ defined by ${\mathcal F}_0A=\Bbb C\Bbb 1$,
${\mathcal F}_1(A)=A$. Then ${\rm gr}(A)$ is the commutative algebra 
$\Bbb C\Bbb 1\oplus U$, where $U=A/\Bbb C\Bbb 1$, and 
the product of any two elements of $U$ is zero. 
The filtration ${\mathcal F}$ extends naturally to $U(A)$ and to $S^tA$, 
and is preserved by the map $\zeta_t$. Thus, it suffices to check that 
${\rm gr}\zeta_t$ is an isomorphism. We have ${\rm gr}(U(A)/(\Bbb 1-t))\cong 
S{\rm gr}(A)/(\Bbb 1-t)\cong SU$, 
and by the results of the previous section, ${\rm gr}(S^tA)\cong S^t{\rm gr}(A)\cong SU$. 
After these identifications, it is easy to see that ${\rm gr}(\zeta_t)$ becomes the identity map. 
This implies the statement.\footnote{The fact that $\zeta_t$ is an isomorphism for generic $t$ also follows from the fact that 
$\zeta_n$ is surjective for each $n$, and asymptotically injective when $n\to \infty$ 
(i.e., injective in any fixed filtration degree for sufficiently large $n$).  }
\end{proof}

\begin{corollary}
Let $M$ be a left $A^{\otimes t}$-module in 
${\rm Ind}\Rep(S_t)$. Let $\pi\in \Rep(S_t)$. 
Then $\Hom(\pi,M)$ is naturally a representation of the Lie algebra $A$ 
with $\Bbb 1$ acting by multiplicaton by $t$. 
\end{corollary}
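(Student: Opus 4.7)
The plan is to use Proposition \ref{unenv} to reduce the statement to a general categorical fact: whenever $B$ is an associative algebra in an additive symmetric monoidal category $\mathcal C$ and $M$ is a $B$-module, the space of ``invariants'' $\Hom(\bold 1, B)$ acts naturally as an ordinary associative algebra on $\Hom(\pi, M)$ for every $\pi \in \mathcal C$. Applied to $\mathcal C = \mathrm{Ind}\Rep(S_t)$ and $B = A^{\otimes t}$, this gives an $S^tA$-module structure on $\Hom(\pi, M)$, and Proposition \ref{unenv} then identifies $S^tA$ with $U(A)/(\Bbb 1 - t)$, which is exactly the statement.

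Concretely, I would first define the action map. Denote by $\rho : A^{\otimes t} \otimes M \to M$ the module structure. For $a \in \Hom(\bold 1, A^{\otimes t})$ and $\phi \in \Hom(\pi, M)$, set
\[
a \cdot \phi \;:=\; \rho \circ (a \otimes \phi) \circ \ell_\pi^{-1},
\]
where $\ell_\pi : \bold 1 \otimes \pi \xrightarrow{\sim} \pi$ is the unit isomorphism. This is bilinear in $(a, \phi)$, so it defines a map $S^tA \otimes \Hom(\pi, M) \to \Hom(\pi, M)$.

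Next I would verify that this is an action of the algebra $S^tA$, i.e.\ that $(ab)\cdot \phi = a \cdot (b \cdot \phi)$ and $1 \cdot \phi = \phi$. This is a purely diagrammatic computation: the product $ab$ in $S^tA = \Hom(\bold 1, A^{\otimes t})$ is by definition $m_{A^{\otimes t}} \circ (a \otimes b) \circ \ell_{\bold 1}^{-1}$, so the equality reduces to the associativity axiom for the module $M$, while the unit axiom reduces to the unit axiom of $M$.

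Finally, Proposition \ref{unenv} gives an isomorphism of algebras $\zeta_t: U(A)/(\Bbb 1 - t) \xrightarrow{\sim} S^tA$, constructed by sending $a \in A$ to the composition $\bold 1 \xrightarrow{\iota_A \otimes \iota_\h} A \otimes \h \to F_1 A^{\otimes t} \hookrightarrow A^{\otimes t}$. Composing with the $S^tA$-action on $\Hom(\pi, M)$ turns the latter into a $U(A)/(\Bbb 1 - t)$-module, which is precisely a representation of the Lie algebra $A$ on which the unit $\Bbb 1 \in A$ acts by the scalar $t$. No step here presents a real obstacle; the only thing to be careful about is keeping track of associators and unitors when writing the diagrams, but since $\mathrm{Ind}\Rep(S_t)$ is strict enough in practice, the verification is essentially formal.
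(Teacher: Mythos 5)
Your proposal is correct and follows exactly the paper's argument: the paper's proof is the one-line observation that $\Hom(\bold 1,A^{\otimes t})$ acts on $\Hom(\pi,M)$, combined with Proposition \ref{unenv}; you have merely spelled out the (routine) diagrammatic verification of that action. No differences in substance.
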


\begin{proof}
This follows immediately from Proposition \ref{unenv}, since 
$\Hom(\bold 1,A^{\otimes t})$ acts on $\Hom(\pi,M)$ for any $\pi$. 
\end{proof}

Now we would like to describe the algebra $A^{\otimes t}$ by generators and relations. 
In the classical case, the tensor power algebra $A^{\otimes n}$ can be presented 
as follows. The generators are $A\otimes \Bbb C^n$ (spanned by $a_1,...,a_n,a\in A$), 
and the defining relations are: 
\begin{equation}\label{threerel}
\Bbb 1_i=\Bbb 1_j,\ i\ne j; \
a_ib_i=(ab)_i; \
a_ib_j=b_ja_i, i\ne j,\text{ for }a,b\in A. 
\end{equation}
By analogy, in the complex rank case, as generators we will take $F_1A^{\otimes t}$, which, by Proposition \ref{degree1},
is $A\otimes \h/\h_0$. More precisely, we will use $A\otimes \h$ as generators, 
and include $\h_0=\Bbb C\cdot \Bbb 1\otimes \h_0\subset A\otimes \h$ in the ideal of relations, which incorporates 
the first relation in (\ref{threerel}). 

There are two other relations among the generators in $A\otimes \h$, which interpolate the second 
and the third relation in (\ref{threerel}), respectively. 
To write them, let $ m: \h\otimes \h\to \h$ be the natural commutative product, and $m^*: \h\to \h\otimes \h$ 
be the dual map to $m$. Also let $\mu_A$ be the multiplication in $A$, and $[,]_A$ the commutator in $A$. The first relation is the image of the morphism 
$\xi: A\otimes A\otimes \h\to (A\otimes \h)\oplus (A\otimes \h)^{\otimes 2}$ given by 
$$
\xi=\mu_A\otimes {\rm Id}-{\rm Id}\otimes {\rm Id}\otimes m^*.
$$
The second relation is the image of the morphism 
$\eta: A\otimes \h \otimes A\otimes \h\to (A\otimes \h)\oplus (A\otimes \h)^{\otimes 2}$
given by the formula
$$
\eta={\rm Id}-\sigma_{13}\sigma_{24}-([,]_A\otimes m)\circ \sigma_{23},
$$
where $\sigma_{ij}$ denotes the permutation of the $i$-th and the $j$-th factor. 

\begin{proposition}\label{gerel}
The algebra $A^{\otimes t}$ is 
the quotient of $T(A\otimes \h)$ by the ideal $I$ generated by $\h_0\subset A\otimes \h$, 
${\rm Im}\xi$, and ${\rm Im}\eta$.  
\end{proposition}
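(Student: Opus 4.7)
The plan is to follow the template of Proposition \ref{unenv}: build a canonical surjection $\phi: B \to A^{\otimes t}$, where $B := T(A\otimes \h)/I$, and then deduce that $\phi$ is an isomorphism by passing to the associated graded with respect to the filtration $\mathcal F_0 A = \Bbb C\Bbb 1 \subset \mathcal F_1 A = A$.

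First, by Proposition \ref{degree1}, the surjection $A\otimes \h \twoheadrightarrow F_1 A^{\otimes t}$ with kernel $\h_0$ extends, via the universal property of the tensor algebra, to an algebra map $\tilde\phi: T(A\otimes \h)\to A^{\otimes t}$ in ${\rm Ind}\,\Rep(S_t)$. By construction $\tilde\phi$ kills $\h_0\subset A\otimes \h$; that it also kills ${\rm Im}\,\xi$ and ${\rm Im}\,\eta$ will be verified by interpolation from integer $t = n$, since at those values $\xi$ and $\eta$ are designed to be the categorical avatars of the relations $a_i b_i = (ab)_i$ and $a_i b_j = b_j a_i$ ($i \neq j$) of (\ref{threerel}), and $\xi$, $\eta$, and $\tilde\phi$ all depend polynomially on $t$. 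Hence $\tilde\phi$ descends to $\phi: B\to A^{\otimes t}$. Surjectivity of $\phi$ reduces (again by interpolation) to the classical fact that $A\otimes \Bbb C^n$ generates $A^{\otimes n}$, i.e.\ that the iterated multiplication $(F_1 A^{\otimes t})^{\otimes k}\to F_k A^{\otimes t}$ is surjective for each $k$.

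For injectivity, the above filtration on $A$ induces compatible filtrations on $T(A\otimes \h)$, on the relation ideal, on $B$, and on $A^{\otimes t}$ (the latter being the one used in Proposition \ref{grad}), making $\phi$ a filtered map. On the associated graded, ${\rm gr}(A) = \Bbb C\Bbb 1 \oplus U$ satisfies $U\cdot U = 0$ and $[U,U] = 0$; consequently ${\rm gr}(\mu_A)$ vanishes on $U\otimes U$ and ${\rm gr}([\,,\,]_A) = 0$, so the relations ${\rm gr}(\xi)$ and ${\rm gr}(\eta)$ degenerate to those presenting the wreath power of the trivial algebra ${\rm gr}\,A$. Thus the problem reduces to identifying ${\rm gr}\,B$ with ${\rm gr}(A^{\otimes t}) = S U \otimes \bigl(\oplus_\lambda \Bbb S^\lambda U \otimes X_\lambda\bigr)$ as given by Proposition \ref{grad}. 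Once this identification is secured, arguing as in Proposition \ref{unenv} -- that is, recognizing ${\rm gr}(\phi)$ as the identity map after natural identifications -- gives that ${\rm gr}(\phi)$, and hence $\phi$, is an isomorphism.

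The main obstacle is exactly this base-case identification on the associated graded: checking that the simplified relations ${\rm gr}(\xi)$, ${\rm gr}(\eta)$, which entangle $U$ with the coproduct-like morphism $m^*:\h\to \h\otimes \h$, really present the symmetric-algebra object of Proposition \ref{grad}. This should amount to a direct computation in the partition-algebra combinatorics underlying $\Rep(S_t)$, and it mirrors -- in greater generality -- the identification of ${\rm gr}(\zeta_t)$ with the identity map used in the proof of Proposition \ref{unenv}.
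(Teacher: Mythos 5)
Your proposal is correct and follows essentially the same route as the paper: construct the natural map $\theta_A: T(A\otimes\h)/I\to A^{\otimes t}$ by interpolation from integer $t$, observe it is filtered with ${\rm gr}(\theta_A)=\theta_{{\rm gr}A}$ where ${\rm gr}A=\Bbb C\Bbb 1\oplus \overline{A}$ has zero multiplication on $\overline{A}$, and reduce to checking the associated graded case. The paper likewise leaves that base-case verification (that $\theta_{{\rm gr}A}$ is an isomorphism, matching Proposition \ref{grad}) as an ``easy to check'' computation, exactly the step you flag as the remaining obstacle.
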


\begin{proof}
It is easy to see that we have a natural homomorphism 
$$
\theta_A: T(A\otimes \h)/I\to A^{\otimes t}
$$
 (interpolating the case of integer $t$). 
This homomorphism preserves natural filtrations,
and ${\rm gr}(\theta_A)=\theta_{{\rm gr}A}$, where 
${\rm gr}A=\Bbb C\Bbb 1\oplus \overline{A}$, $\overline{A}:=A/\Bbb C\Bbb 1$, 
with multiplication on $\overline{A}$ being zero. It is easy to check that 
$\theta_{{\rm gr}A}$ is an isomorphism, which implies that so is 
$\theta_A$.   
\end{proof} 

\begin{remark}
In a similar way, one can define the algebra $\Bbb C[S_t]\ltimes A^{\otimes t}$
inside ${\rm IndRep}(S_t)$ by generators and relations. Namely, in the classical setting, 
combining relations (\ref{snrel}) and (\ref{threerel}), we see that 
the algebra $\Bbb C[S_n]\ltimes A^{\otimes n}$ is generated by $a_i$, $a\in A$, and $s_{ij}$, with 
the relations 
\begin{gather*}
s_{ij}^2=1,\ s_{ij}s_{jk}=s_{ik}s_{ij},\ s_{ij}s_{kl}=s_{kl}s_{ij}, \\
\Bbb 1_i=\Bbb 1_j,\ i\ne j; \
a_ib_i=(ab)_i; \
a_ib_j=b_ja_i, i\ne j,\text{ for }a,b\in A,\\
s_{ij}a_i=a_js_{ij},
\end{gather*} 
where different subscripts represent different indices. These relations are easy to interpolate to 
complex rank, similarly to how one does it for (\ref{snrel}) and (\ref{threerel}) separately, and 
one defines the algebra $\Bbb C[S_t]\ltimes A^{\otimes t}$ as the quotient of 
$T(E\oplus A\otimes \h)$ by the interpolation of these relations. 
\end{remark}

\begin{remark}
Proposition \ref{gerel} provides a construction 
of the complex tensor power $V^{\otimes t}$ for $V=\Bbb C\Bbb 1\oplus U$, which does not use 
representation theory of ${\mathfrak{gl}}(V)$. Namely, 
define a unital algebra structure on $V$ by declaring $\Bbb 1$ a unit and 
setting $u_1u_2=0$ for $u_1,u_2\in U$. Then we can define 
$V^{\otimes t}$ to be the algebra defined by the presentation of 
Proposition \ref{gerel}. 
\end{remark}

\begin{remark}
It is easy to see that if $V$ is an $A$-module then $V^{\otimes t}$ is 
naturally an $A^{\otimes t}$-module (even if $\Bbb C\Bbb 1\subset V$ 
is not fixed by $A$). Namely, the action of the generators 
$A\otimes \h$ on $V^{\otimes t}$ is obtained by interpolating from integer $t$. 
Thus for every $A$-module $V$ we get a module $M_t(V,v):=V^{\otimes t}$ (with distinguished vector $v$) over $A^{\otimes t}$ for any choice 
of a nonzero vector $\Bbb 1=v\in V$ (clearly, this vector matters only up to scaling). In particular, by taking invariants, it defines 
a module $\Hom(\bold 1,M_t(V,v))=S^tV$ over the Lie algebra $A$ with $\Bbb 1$ acting by multiplication by $t$. 
For example, as explained in the previous section, if $A={\rm End}V$ and $V$ is finite dimensional, then $\Hom(\bold 1,M_t(V,v))$ is the parabolic Verma module 
over ${\rm End}V={\mathfrak{gl}}(V)$ with highest weight $(t,0,0,...)$ integrable to ${\rm Aut}(V,v)$. Note that this module depends in an essential way 
on the choice of the line in $V$ generated by $v$ (and hence so does $M_t(V,v)$). 
\end{remark}

\subsection{Knop's category $\Rep(S_t\ltimes \Gamma^t)$}

Let $\Gamma$ be a finite group. 
Recall that the irreducible representations of the wreath product
$S_n\ltimes \Gamma^n$ are labeled by 
maps 
$$
\lambda: {\rm Irrep}\Gamma\to {\rm Partitions},
$$
$V\to \lambda_V$, such that $\sum_V |\lambda_V|=n$.  
Namely, the representation corresponding to $\lambda$ is 
$$
\pi_\lambda={\rm Ind}_{\prod_V S_{|\lambda_V|}\ltimes
\Gamma^n}^{S_n\ltimes \Gamma^n}
\bigotimes_V (\pi_{\lambda_V}\otimes V^{\otimes |\lambda_V|}).
$$

By analogy with Deligne's 
construction, F. Knop constructed a Karoubian tensor category 
$\Rep(S_t\ltimes \Gamma^t)$ for complex $t$,
interpolating the categories ${\bold{Rep}}(S_n\ltimes \Gamma^n)$ (\cite{Kn2}). 
More precisely, if $t$ is a nonnegative integer, then the category $\Rep(S_t\ltimes \Gamma^t)$
projects onto the category ${\bold{Rep}}(S_n\ltimes \Gamma^n)$. 

The indecomposable objects $X_\lambda$ of $\Rep(S_t\ltimes \Gamma^t)$ 
are labeled by all maps 
$$
\lambda: {\rm Irrep}\Gamma\to {\rm Partitions},
$$
$V\to \lambda_V$. 
Similarly the case of $\Gamma=1$, the objects $X_\lambda$ interpolate 
the representations $\pi_{\widetilde{\lambda}(n)}$, where 
$\sum_V\widetilde{\lambda}(n)_V=n$, and $\widetilde{\lambda}(n)_V=\lambda_V$ for $V\ne \Bbb C$, while 
$$
\widetilde{\lambda}(n)_{\Bbb C}=(n-|\lambda|,(\lambda_{\Bbb C})_1, (\lambda_{\Bbb C})_2,...).
$$ 

It is easy to check that the tensor category $\Rep(S_t\ltimes \Gamma^t)$ is  a full 
subcategory of the category of modules over the Hopf algebra $\Bbb C[\Gamma]^{\otimes t}$. 

\begin{remark}
For any tensor category ${\mathcal{C}}$, M. Mori in \cite{Mo} defined 
a new tensor category $S_t({\mathcal{C}})$ , such that 
if ${\mathcal C}={\bold{Rep}}(\Gamma)$ then 
$S_t({\mathcal{C}})=\Rep(S_t\ltimes \Gamma^t)$. 
If ${\mathcal{C}}=H$-fmod (finite dimensional modules), where $H$ is a 
Hopf algebra then $S_t({\mathcal{C}})$ is a 
full subcategory of $H^{\otimes t}$-mod. 
\end{remark}

\subsection{The central elements for wreath products} 

Every conjugacy class of the wreath product $S_n\ltimes \Gamma^n$ gives rise to a central element 
in its group algebra (the sum of all elements in the conjugacy class). 
For the class of transpositions, we will denote this
central element just by $\Omega$. For the class defined by a nontrivial 
conjugacy class $C\subset \Gamma$, we'll denote the central 
element by $\Omega_C$. 

Similarly to the case of $S_n$, the elements $\Omega$ and $\Omega_C$
can be interpolated to complex $t$ 
(to define endomorphisms of the identity functor of $\Rep(S_t\ltimes \Gamma^t)$). 
If $t\notin\Bbb Z_+$, this can be done directly by interpolation from large integer $n$. Namely,
it is easy to check that $\Omega$ and $\Omega_C$ 
act on $\pi_{\widetilde{\lambda}(n)}$ by scalars which are 
polynomials in $n$, so we can define them for $t\notin \Bbb
Z_+$ on $X_\lambda$ by substituting $t$ instead of $n$ into
these polynomials. 

\section{Interpolation of degenerate affine Hecke algebras and symplectic reflection algebras}

\subsection{Interpolation of degenerate affine Hecke algebras of
type A}

Let $\h=\h_0\oplus \Bbb C$ 
be the permutation representation of the symmetric group
$S_n, n\ge 1$. Let $\Sigma$ be the set of reflections (i.e.,
transpositions) in $S_n$.

Recall (\cite{Dr}) that the degenerate 
affine Hecke algebra (dAHA) $\Hh_k(n)$ of type $A$ 
(for $k\in \Bbb C$) is the quotient of $\Bbb C[S_n]\ltimes T\h$ by the relations
$$
[y,y']=k\sum_{s,s'\in \Sigma}(sy,s'y')_\h[s,s'], y,y'\in \h.
$$
where $(,)_\h$ is the natural inner product on $\h$. 

This implies that modules over $\Hh_k(n)$ can be 
described categorically as follows. 

\begin{proposition}\label{ahaa}
An $\Hh_k(n)$-module is the same thing as an $S_n$-module $M$ 
equipped with an $S_n$-morphism 
$$
y: \h\otimes M\to M,
$$
such that the morphism 
$$
y\circ (1\otimes y)\circ ((1-\sigma)\otimes 1): \h\otimes \h\otimes M\to M, 
$$
(where $\sigma$ is the permutation 
of components) equals 
$$
k((,)_\h\otimes 1)\circ [\Omega^{13},\Omega^{23}],
$$
where $\Omega_{ij}$ is the Jucys-Murphy morphism $\Omega$ acting in the tensor product of the 
$i$-th and the $j$-th factor. 
\end{proposition}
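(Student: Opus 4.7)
The plan is to recognize the proposition as a direct translation of the defining relation of $\Hh_k(n)$ into categorical language. Since $\Hh_k(n)$ is the quotient of $\Bbb C[S_n]\ltimes T\h$ by a single quadratic relation, a module over it is a $\Bbb C[S_n]\ltimes T\h$-module satisfying that relation, and a $\Bbb C[S_n]\ltimes T\h$-module is an $S_n$-module $M$ together with a linear action $\h\otimes M\to M$ compatible with the semidirect product rule $svs^{-1}=s(v)$ for $s\in S_n$, $v\in\h$. Compatibility with this conjugation is exactly the $S_n$-equivariance of the action map, so the first piece of data is precisely a morphism $y:\h\otimes M\to M$ in ${\bold{Rep}}(S_n)$.

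Next I would translate the left-hand side of the dAHA relation. For $v,w\in\h$ and $m\in M$, the commutator $[v,w]$ acting on $m$ is $y(v,y(w,m))-y(w,y(v,m))$, which is exactly $y\circ(1\otimes y)\circ((1-\sigma)\otimes 1_M)$ applied to $v\otimes w\otimes m$, where $\sigma$ denotes the swap of the two $\h$-factors.

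For the right-hand side, I would compute $[\Omega^{13},\Omega^{23}]$ on $\h\otimes\h\otimes M$ directly. Expanding $\Omega=\sum_{s\in\Sigma}s$ gives $\Omega^{13}(v\otimes w\otimes m)=\sum_s sv\otimes w\otimes sm$ and $\Omega^{23}(v\otimes w\otimes m)=\sum_s v\otimes sw\otimes sm$, from which
$$
[\Omega^{13},\Omega^{23}](v\otimes w\otimes m)=\sum_{s,s'\in\Sigma}sv\otimes s'w\otimes [s,s']\,m,
$$
and composing with $((,)_\h\otimes 1_M)$ produces $\sum_{s,s'}(sv,s'w)_\h\,[s,s']\,m$, matching the dAHA right-hand side up to the factor $k$. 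Combining the two sides, the categorical equation in the proposition is equivalent to the defining relation of $\Hh_k(n)$, so the proposition follows. The only substantive step is the commutator calculation for $[\Omega^{13},\Omega^{23}]$ above, which is a routine check in $\End(\h\otimes\h\otimes M)$; I do not anticipate a serious obstacle.
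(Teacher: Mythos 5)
Your argument is correct and is exactly the unwinding the paper has in mind (the paper states the proposition without proof, as an immediate consequence of the presentation of $\Hh_k(n)$ as a quotient of $\Bbb C[S_n]\ltimes T\h$). The key check, that $\Omega^{13}$ acts on $\h\otimes\h\otimes M$ as $\sum_{s\in\Sigma}s\otimes 1\otimes s$ because the central element acts diagonally on a tensor product, and hence $[\Omega^{13},\Omega^{23}]=\sum_{s,s'}s\otimes s'\otimes[s,s']$, is carried out correctly and reproduces the defining relation verbatim.
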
 

The object $\h$ (the permutation representation) 
is defined in the category $\Rep(S_t)$ for any $t$, and has a
natural symmetric pairing $(,)_\h: \h\otimes \h\to \bold 1$. 
Therefore, Proposition \ref{ahaa} allows us to define 
the interpolation of the category of modules over 
$\Hh_k(n)$. Namely, we make 
the following 

\begin{definition}
Let $t\in \Bbb C$. An object of the category $\widehat{\Rep}(\Hh_k(t))$ is an ind-object of Deligne's category 
$\Rep(S_t)$ equipped with 
a morphism 
$$
y: \h\otimes M\to M,
$$
such that the morphism 
$$
y\circ (1\otimes y)\circ ((1-\sigma)\otimes 1): \h\otimes \h\otimes M\to M, 
$$
equals 
$$
k((,)_\h\otimes 1)\circ [\Omega^{13},\Omega^{23}].
$$
Morphisms in $\widehat{\Rep}(\Hh_k(t))$ are morphisms in ${\rm Ind}\Rep(S_t)$
which respect $y$. 
\end{definition}

\begin{remark} Note that similarly to the classical case, 
$k$ can be rescaled without changing the category
(so it can be made $0$ or $1$). 
\end{remark} 

\begin{remark}
Alternatively, we can define the algebra $\Hh_k(t)$ in $\Rep(S_t)$ to be the 
quotient of $\Bbb C[S_t]\ltimes T\h$ by the interpolation of the defining relation 
of $\Hh_k(n)$. Then the category $\widehat{\Rep}(\Hh_k(t))$ may be defined as the category 
of modules over $\Hh_k(t)$ in $\Rep(S_t)$ in which $\Bbb C[S_t]$ acts by the canonical action. 
A similar remark applies to the examples below (degenerate AHA of type B and 
wreath product symplectic reflection algebras, in particular, rational Cherednik algebras).    
\end{remark}

\begin{remark}
The category $\Rep(\Hh_k(t))$ has been studied in more detail in the paper \cite{M}. 
\end{remark} 

\subsection{Interpolation of degenerate affine Hecke algebras of
type B} 

Let $\h$ be the reflection representation of the Weyl group 
$S_n\ltimes \Bbb Z_2^n$ of type $B_n$. Let $\Sigma$ be the set of
reflections in this group conjugate to a transposition in $S_n$,
and $\Sigma_{-1}$ the set of reflections conjugate to
$(-1,1,...,1)$. Recall (see e.g. \cite{RS}) that 
the dAHA $\Hh_{k_1,k_2}(n)$ of type $B_n$ is 
the quotient of $\Bbb C[S_n\ltimes \Bbb Z_2^n]
\ltimes T\h$ by the relations
$$
[y,y']=k_1\sum_{s,s'\in \Sigma}(sy,s'y')_\h[s,s']+
k_2\sum_{s\in \Sigma,s'\in \Sigma_{-1}}(sy,s'y')_\h[s,s'],
$$
where $(,)_\h$ is the natural inner product on $\h$. 

The object $\h$ is defined in the category $\Rep(S_t\ltimes {\Bbb Z}_2^t)$ 
for any $t$, and has a natural symmetric pairing 
$(,)_\h: \h\otimes \h\to \bold 1$. 
Therefore, similarly to the type $A$ case, we can define 
the interpolation of the category of modules over 
$\Hh_{k_1,k_2}(n)$. Namely, we make 
the following 

\begin{definition}
Let $t\in \Bbb C$. An object of the category 
$\widehat{\Rep}(\Hh_{k_1,k_2}(t))$ is an ind-object of Knop's category 
$\Rep(S_t\ltimes {\Bbb Z}_2^t)$ equipped with 
a morphism 
$$
y: \h\otimes M\to M,
$$
such that the morphism 
$$
y\circ (1\otimes y)\circ ((1-\sigma)\otimes 1): \h\otimes \h\otimes M\to M, 
$$
equals 
$$
((,)_\h\otimes 1)\circ
(k_1[\Omega^{13},\Omega^{23}]+k_2[\Omega^{13},
\Omega_{-1}^{23}]).
$$
Morphisms in $\widehat{\Rep}(\Hh_{k_1,k_2}(t))$ are morphisms in
${\rm Ind}\Rep(S_t\ltimes \Bbb Z_2^t)$
which respect $y$. 
\end{definition}

\begin{remark} Note that similarly to the classical case, 
$(k_1,k_2)$ can be rescaled without changing the category. 
\end{remark} 

\subsection{Interpolation of symplectic reflection algebras for wreath products} 

Let $\Gamma\subset SL_2(\Bbb C)$ be a finite subgroup. 
Let $\hbar,k\in \Bbb C$, and fix complex numbers $c_C$, 
for nontrivial conjugacy classes $C\subset \Gamma$.
Also denote by $T_C$ the half-trace of an element $\gamma\in C$ in the
tautological representation: $T_C=\frac{1}{2}\Tr|_{\Bbb
C^2}\gamma$. 

Let $V=(\Bbb C^2)^n$ be the tautological representation 
of the wreath product $S_n\ltimes \Gamma^n$.
This representation has a natural symplectic pairing $\omega(,)$.  
Let $\Sigma$ be the set of elements conjugate 
in $S_n\ltimes \Gamma^n$ to transpositions, and
$\Sigma_C$ the set of elements of $S_n\ltimes \Gamma^n$ conjugate to
elements of the form $(\gamma,1,1,...,1)$, $\gamma\in C$
where $C$ is a nontrivial conjugacy class in $\Gamma$. 

Recall (\cite{EG,EGG}) that the symplectic reflection
algebra $H_{\hbar ,k,c}(\Gamma,n)$ is the quotient of $\Bbb
C[S_n\ltimes \Gamma^n]\ltimes TV$ by the relations 
$$
[y,y']=\hbar\omega(y,y')-k\sum_{s\in \Sigma}\omega(y,(1-s)y')s
-\sum_C \frac{c_C}{1-T_C}\sum_{s\in
\Sigma_C}\omega((1-s)y,(1-s)y')s.
$$

The object $V$ is defined in the category $\Rep(S_t\ltimes \Gamma^t)$ 
for any $t$, and has a natural symplectic pairing 
$\omega: V\otimes V\to \bold 1$. 
Therefore, we can define 
the interpolation of the category of modules over 
$H_{\hbar,k,c}(\Gamma,n)$. Namely, we make 
the following 

\begin{definition}
Let $t\in \Bbb C$. An object of the category 
$\widehat{\Rep}(H_{\hbar,k,c}(\Gamma,t))$ is an ind-object of Knop's category 
$\Rep(S_t\ltimes \Gamma^t)$ equipped with 
a morphism 
$$
y: V\otimes M\to M,
$$
such that the morphism 
$$
y\circ (1\otimes y)\circ ((1-\sigma)\otimes 1): V\otimes V\otimes M\to M, 
$$
equals 
$$
(\omega\otimes 1)\circ 
(\hbar-k(\Omega^3-\Omega^{23})
-\sum_C \frac{c_C}{1-T_C}(\Omega_C^3-\Omega_C^{13}-\Omega_C^{23}+
\Omega_C^{123})).
$$
Morphisms in $\widehat{\Rep}(H_{\hbar,k,c}(\Gamma,n))$ are morphisms in
${\rm Ind}\Rep(S_t\ltimes \Gamma^t)$
which respect $y$. 
\end{definition}
 
\begin{remark} Note that similarly to the classical case, 
$(\hbar,k,c)$ can be rescaled without changing the category. 
\end{remark} 

\subsection{Interpolation of rational Cherednik algebras of type $A$}
 
It is instructive to consider separately the simplest 
special case $\Gamma=1$, i.e., that of the
rational Cherednik algebra of type $A$. In this case, there is no
classes $C$, $V=\h\oplus \h^*$ and the definition is simplified.\footnote{To avoid confusion,
we do not identify $\h$ and $\h^*$ here.} 
 Namely, we have

\begin{definition}\label{catdes1}
An object of the category 
$\widehat{\Rep}(H_{\hbar,c}(t))$ is an ind-object of Deligne's category 
$\Rep(S_t)$ equipped with two morphisms 
$$
x: \h^*\otimes M\to M,\ y: \h\otimes M\to M,
$$
satisfying the following conditions: 

(i) The morphism 
$$
x\circ (1\otimes x)\circ ((1-\sigma)\otimes 1): 
\h^*\otimes \h^*\otimes M\to M
$$ 
is zero;  

(ii) The morphism 
$$
y\circ (1\otimes y)\circ ((1-\sigma)\otimes 1): 
\h\otimes \h\otimes M\to M
$$
is zero;  

(iii)
The morphism 
$$
y\circ (1\otimes x)-x\circ (1\otimes y)\circ (\sigma\otimes 1): \h\otimes
\h^*\otimes M\to M, 
$$
when regarded as an endomorphism of $\h^*\otimes M$, 
equals 
$$
\hbar-c(\Omega^2-\Omega^{12}).
$$
Morphisms in this category are morphisms of the underlying ind-objects of $\Rep(S_t)$ respecting 
$x,y$.  
\end{definition}

\begin{remark}
The category $\widehat{\Rep}(H_{1,c}(t))$ is studied in much more detail in the paper \cite{EA1}. 
\end{remark}

\section{Appendix: Proof of Lemma \ref{comb}}

Let $\mu$ be a Young diagram with $|\mu|=n$, 
such that $\dim \pi_\mu\le C n^k$, and 
$d$ be the length of its first row. For $1\le i\le d$, let 
$c_i$ be the length of the $d-i+1$-th column of $\mu$. 
Let $\mu'$ be obtained from $\mu$ 
by deleting the first row. Then by the hooklength formula,  
$$
\dim \pi_{\mu}=\dim \pi_{\mu'}\binom{n}{d}\prod_{i=1}^{d}\left(1+\frac{c_i-1}{i}\right)^{-1}.
$$
Since $\sum c_i=n$, by the arithmetic and geometric mean inequality
(as in the proof of Claim 1 in Section 3.2.1 of \cite{EFP}), we have 
 $$
 \prod_{i=1}^{d}\left(1+\frac{c_i-1}{i}\right)\le 
 \prod_{i=1}^{d}c_i\le \left(\frac{\sum_{i=1}^{d}c_i}{d}\right)^{d}=
 \left(\frac{n}{d}\right)^{d}, 
 $$
 so  we get 
 $$
\dim \pi_{\mu}\ge \binom{n}{d}\left(\frac{n}{d}\right)^{-d}.
$$ 
The same bound holds if $d$ is the length of the first column. 
Hence, it holds if $d$ is the maximum of the lengths of the first 
row and the first column of $\mu$, which we assume from now on. 

Our job is to show that for large enough $n$, one must have $n-d\le K$ for 
some fixed $K$. Then it will follow from the hooklength formula that we actually have $d\ge n-k$ 
for large $n$. 

Since $\dim \pi_\mu\le Cn^k$, taking logs, we get the inequality
$$
\log\binom{n}{d}-d\log\left(\frac{n}{d}\right)\le k\log n+\log C.
$$
Let us divide this inequality by $n$, and use Stirling's formula. 
Setting $x=1-d/n$, 
after a short calculation we obtain
\begin{equation}\label{basineq}
x\log\frac{1}{x}=O\left(\frac{(k+1/2)\log n}{n}\right). 
\end{equation}

But we know that $d\ge \sqrt{n}$ (as $\mu$ fits in the $d$ by $d$ square). 
Hence, $x\le 1-n^{-1/2}$. Formula (\ref{basineq}) therefore implies that for large $n$, $x$ must actually 
be close to $0$. But then we must have $x=O(1/n)$, i.e. $n-d=xn$ is bounded, as desired.

\end{document}